 \newtheorem{theorem}{Theorem}
\newtheorem{prop}[theorem]{Proposition}
\newtheorem{lemma}[theorem]{Lemma}
\newtheorem{conjecture}[theorem]{Conjecture}
\newcommand{\ol}{\overline}
\newcommand{\ind}{\mathbf{1}}
\newcommand{\fl}{\text{fl}}
\numberwithin{equation}{section}
\title{Erd\H{o}s' minimum overlap problem}
\author{Ethan Patrick White}
\date{}
\begin{document}

\maketitle

\begin{abstract}
We obtain a substantially improved lower bound for the minimum overlap problem asked by Erd\H{o}s. Our approach uses elementary Fourier analysis to translate the problem to a convex optimization program. 

\end{abstract}

\let\thefootnote\relax\footnote{The author is grateful for support from the Killam Trusts, NSERC, and UBC.}
\let\thefootnote\relax\footnote{2020 \emph{Mathematics Subject Classification:} 05A17, 42A16, 90C90 }


\section{Introduction}

In 1955 Erd\H{o}s posed the following problem~\cite{E}. Let $n$ be a positive integer and $A,B \subset [2n]$ be a partition of $[2n]$ such that $|A|=|B| = n$. For any such partition and integer $-2n<k<2n$, define $M_k$ to be the number of solutions $(a,b) \in A  \times B$ to $a-b = k$. Estimate the size of the function
\[ M(n) = \min_{A\cup B = [2n]} \max_{-2n<k<2n} M_k,\]
where the minimum is taken over all partitions of $[2n]$ into equal-sized sets. Erd\H{o}s proved that $M(n)>n/4$, a result that can be obtained by the following averaging argument. The sum over all $-2n<k<2n$ of $|M_k|$ is exactly $|A \times B| = n^2$, and so the average value of $|M_k|$ exceeds $n/4$. On the other hand, we can take $A$ to be $[n/2,3n/2]$ giving the upper bound $M(n) \leq n/2$. The minimum overlap problem appears in Richard Guy's renown book, \emph{Unsolved Problems in Number Theory}. See his book for a brief survey of the progress made by many authors on improved estimates of $M(n)$~\cite{RKG}.

Haugland proved that the limit 
\[ \mu:= \lim_{n \to \infty} \frac{M(n)}{n},\]
exists~\cite{H1}. We will refer to $\mu$ as the \emph{minimum overlap constant}. Prior to this work, the best estimates of $\mu$ were 
\[ 0.35639395869 \approx \sqrt{4-\sqrt{15}}  \leq \mu \leq 0.3809268534330870 .\]
The lower bound above is due to Moser~\cite{M}, and the upper bound is due to Haugland~\cite{H2}. Moser and Murdeshwar were the first to study the following function analogue of Erd\H{o}s' original problem. For all measurable functions $f \colon [-1,1] \to [0,1]$ define the complementary function $g \colon [-1,1] \to [0,1]$ such that $f(x) + g(x) = 1$ for all $x \in [-1,1]$. Estimate the value of
\begin{equation}\label{contver} \inf_{f} \sup_{x \in [-2,2]} \int_{-1}^1 f(t)g(x+t) \ dt,\end{equation}
where the minimum is taken over all measurable $f \colon [-1,1] \to [0,1]$ satisfying $\int_{-1}^1 f(x) \ dx = 1$. A key step in Haugland's method is a theorem of Swinnerton-Dyer proving that \eqref{contver} is in fact also $\mu$, see~\cite{H1} for the proof. It will be easiest for us to work with \eqref{contver} as our definition of $\mu$. In this work we obtain a significant improvement on the lower bound of $\mu$, using elementary Fourier analysis combined with convex programming.

\begin{theorem}\label{T1} The minimum overlap constant $\mu$ is lower bounded by $0.379005$.
\end{theorem}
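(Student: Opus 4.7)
The plan is to recast $\mu$ as a convex optimization over the Fourier transform of $f$ and then lower bound it rigorously. Set $h := \mathbf{1}_{[-1,1]}$ and extend $f,g$ by zero outside $[-1,1]$, so that $g = h - f$. Then for $x \in [-2,2]$,
\[
I(x) := \int f(t) g(x+t)\,dt = F(x) - C(x),\qquad F(x) := \int f(t) h(x+t)\,dt,\ \ C(x) := \int f(t) f(x+t)\,dt.
\]
A direct computation gives $\hat F(\xi) = \overline{\hat f(\xi)}\,\hat h(\xi)$ and $\hat C(\xi) = |\hat f(\xi)|^2$, where $\hat h(\xi) = \sin(2\pi\xi)/(\pi\xi)$. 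Thus the overlap function has a clean Fourier-theoretic representation in terms of $\hat f$ and the sinc-like kernel $\hat h$.

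For any nonnegative real even test function $\psi$ supported in $[-2,2]$, the pointwise bound $I(x) \le M := \sup_x I(x)$ combined with Plancherel gives
\[
M \int \psi(x)\,dx \;\ge\; \int I(x)\psi(x)\,dx \;=\; \int \hat\psi(\xi)\bigl(\hat h(\xi)\,\mathrm{Re}\,\hat f(\xi) - |\hat f(\xi)|^2\bigr)\,d\xi.
\]
This compresses the continuum of constraints $\{I(x)\le M\}_x$ into a single convex inequality on $\hat f$, parameterized by the choice of $\psi$. Dividing by $\int\psi$ and minimizing the right-hand side over admissible $f$ produces a lower bound on $M$; maximizing over $\psi$ then gives the sharpest such bound attainable from this relaxation. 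Admissibility for $f$ translates in Fourier to $\hat f(0)=1$, positive-definiteness of $\hat f$ (from $f\ge 0$), positive-definiteness of $\hat h - \hat f$ (from $g \ge 0$), and moment consequences like $\int |\hat f(\xi)|^2\,d\xi \le 1$ (from $f^2 \le f$).

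To execute the optimization I would parameterize $\psi$ by a finite-dimensional nonnegative family---for instance a cosine polynomial of bounded degree on $[-2,2]$, or a nonnegative combination of Fej\'er-type kernels---and replace the positive-definiteness of $\hat f$ and $\hat h - \hat f$ by finitely many positive-semidefinite conditions on moment matrices at a chosen grid of frequencies. The result is a finite-dimensional convex (indeed semidefinite) program whose optimal value is a rigorous lower bound on $\mu$. Solving this program, either by exhibiting an analytic dual certificate or by numerical solution together with a hand-verifiable dual feasible point, should yield $\mu \ge 0.379005$.

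The main obstacle I expect is calibrating the relaxation. Weak consequences of $0 \le f \le 1$ such as $\int|\hat f|^2 \le 1$ alone are far too loose to beat Moser's $\sqrt{4-\sqrt{15}}$ lower bound, so the positive-definite constraints on $\hat f$ and $\hat h - \hat f$ must be encoded richly enough to capture the true extremal structure; at the same time the relaxation must remain finite-dimensional and tractable, so that the resulting convex program is actually solvable and the numerical improvement can be certified rigorously.
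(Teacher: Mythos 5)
Your starting identity is the same one the paper builds on (Lemma~\ref{expfou}: $\hat M(k)=\tfrac{4}{k\pi}\sin(k\pi/2)\,\ol{\hat f(k)}-4|\hat f(k)|^2$), and exploiting nonnegativity of $f$ and of $g=h-f$ is the right instinct. But the optimization you set up has a structural gap you have not addressed. For a fixed test function $\psi$, the quantity you must \emph{minimize} over admissible $f$, namely $\int\hat\psi\bigl(\hat h\,\mathrm{Re}\,\hat f-|\hat f|^2\bigr)$, is concave in $\hat f$ wherever $\hat\psi\ge 0$ (and $\hat\psi(0)=\int\psi>0$ always), so this is a concave minimization over a convex set, not a convex or semidefinite program; its minimum is attained at extreme points of the admissible set, i.e.\ at $\{0,1\}$-valued $f$, which is the original combinatorial problem in disguise. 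Any convexification must replace $-|\hat f(\xi)|^2$ by a convex \emph{lower} bound, i.e.\ bound $|\hat f(\xi)|^2$ from above, and the crude available upper bounds (Parseval, $\sum_k|\hat f(k)|^2\le 1/2$) destroy the estimate. The paper avoids this issue entirely by keeping a discretized profile of $M$ itself as variables ($w_j,v_j$, the averages of $M$ on intervals of length $2/N$), minimizing the linear objective $\Omega\ge w_j,v_j$, and placing each quadratic identity on the concave side of an inequality constraint, as in \eqref{fullcos}: a linear form in $w,v$ is required to be at most $\tfrac{4\sin(m\pi/2)}{m\pi}a_m-2(a_m^2+b_m^2)$, which \emph{is} a convex constraint.

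Separately, even with a correct convexification, your single-test-function relaxation discards the two constraints responsible for all earlier lower bounds --- the mass identity $\int_{-2}^2 M=1$ and the second-moment identity $\int x^2M=2/3+E(M)^2/2$ of Lemma~\ref{centralM} --- and it has no analogue of the paper's localization step. The paper notes explicitly that its full convex program, run with the a priori ranges $E(M^\ast)\in[0,2]$, $c_1^\ast\in[0,1]$, $d_1^\ast\in[-1,1]$, returns an optimum near $0.25$; the value $0.379005$ is only reached by a divide-and-conquer over small windows for $(E(M^\ast),c_1^\ast,d_1^\ast)$, with many separately verified dual-feasible points (Tables~\ref{dprogout} and~\ref{dprogout2}) and the ellipse-covering argument of Appendix~II. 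A single relaxation of the kind you describe, without these ingredients, cannot certify $\mu\ge 0.379005$.
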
 

The bound in Theorem~\ref{T1} can certainly be improved with more computation time for our convex program. The upper and lower bounds for $\mu$ now differ by $0.5\%$. 




\section{Outline}

Throughout this work, $f(x),g(x),M(x)$ will always denote measurable functions satisfying the relationships
\[ f \colon [-1,1] \to [0,1] \text{ such that } \int_{-1}^1 f(x) \ dx = 1;\]
\[ g \colon [-1,1] \to [0,1] \text{ such that } f(x) + g(x) = 1 \text{ for all } x \in [-1,1];\]
\begin{equation}\label{fgm} M \colon [-2,2] \to [0,1] \text{ such that } M(x) = \int_{-1}^1 f(t) g(x+t) \ dt.\end{equation}

The minimum overlap problem is to determine the largest $\mu$ such that $\| M \|_\infty \geq\mu$ for all functions $M$ satisfying \eqref{fgm}. Lower bounds on $\mu$ can be obtained by observing properties held by $M(x)$. For example, a first simple property held by $M(x)$ is 
\begin{equation}\label{outmass} \int_{-2}^2 M(x) \ dx =   \int_{-2}^2  \int_{-1}^1 f(t) g(x+t) \ dtdx =  \int_{-1}^1 f(t)  \int_{-2}^2 g(x+t) \ dx dt =1.  \end{equation}
Therefore the average value of $M(x)$ is at least 0.25 and so $\mu \geq 0.25$. The discrete version of this argument was already mentioned in the introduction. A second property held by $M(x)$, and the key insight in Moser and Murdeshwar's method \cite{M}, \cite{M2} is that
\begin{equation}\label{outvar} \int_{-2}^2 (x-E(M))^2 M(x) \ dx \leq 2/3, \quad  \text{where} \quad E(M) = \int_{-2}^2 x M(x) \ dx, \end{equation}
is the expected value of $M(x)$. In other words, the variance of $M(x)$ is upper bounded by $2/3$. The variance of a function is minimized when as much mass as possible is centred at its mean. Therefore the variance of $M(x)$ is at least the variance of
\[ \tilde{M}(x) = \begin{cases} \mu & \text{if } -\frac{1}{2\mu} \leq x \leq \frac{1}{2\mu} \\ 0 & \text{otherwise.} \end{cases}\]
The variance of $\tilde{M}$ is $1/(12\mu^2)$. Since $1/(12\mu^2) \leq 2/3$ we have $\mu \geq 1/\sqrt{8}$. 

The key idea behind our improvement is that Fourier analysis can be used to construct an infinite number of new properties satisfied by $M(x)$. The most important new property we find and use is that all even cosine Fourier coefficients of $M(x)$ are nonpositive, i.e. 
\begin{equation}\label{outcos} \int_{-2}^2 \cos(\pi k x) M(x) \ dx \leq 0 , \quad \text{for all } k \geq 1.\end{equation}
The way we take advantage of these new properties is by constructing a linear program where the variables represent the average value of $M(x)$ over small intervals. With this programming technique we can transfer \eqref{outmass}, \eqref{outvar}, and \eqref{outcos} into constraints of a linear program. This short program is the subject of Section~\ref{simpsec}. Under the assumption that an optimal $f(x)$ is even, the output of this program proves $\mu \geq 0.375$. 

In order to prove our larger lower bound in Theorem~\ref{T1} and remove the assumption that an optimal $f(x)$ is even, we use a more complicated convex program. This is the subject of Section~\ref{fullconsec}. The new properties of $f(x),M(x)$ pairs we find are derived in Section~\ref{setup}. These new properties become the constraints used in the linear program of Section~\ref{simpsec} and the convex program of Section~\ref{fullconsec}. In Section~\ref{fullconsec} we discuss the data collected from the convex program and prove Theorem~\ref{T1}. 


\section{Set up}\label{setup}

In this section we derive properties about $f,g,M$ that will form key constraints in the programs used in later sections. 

\subsection{Properties from Fourier analysis}

The properties derived in this subsection relate to the Fourier coefficients of $f,g,M$. We first consider $f,g,M$ as functions on $[-2,2]$. In the case of $f$ and $g$ we define $f(x) = g(x) = 0$ for $x \not\in [-1,1]$. Their Fourier transforms are defined for all $k \in \mathbb{Z}$ and given by
\[ \hat{f}(k) = \frac{1}{4}\int_{-2}^2 e^{-\frac{\pi i}{2} kx} f(x) \ dx, \quad  \hat{g}(k) = \frac{1}{4}\int_{-2}^2 e^{-\frac{\pi i}{2} kx} f(x) \ dx, \quad  \hat{M}(k) = \frac{1}{4}\int_{-2}^2 e^{-\frac{\pi i}{2} kx} M(x) \ dx.  \]

\begin{lemma}\label{expfou} For all $f,M$ satisfying \eqref{fgm} and $k \in \mathbb{Z}\setminus\{0\}$ we have
\[ \hat{M}(k) = \frac{4}{k\pi } \sin \left(k\pi /2 \right) \ol{\hat{f}(k)} - 4|\hat{f}(k)|^2,\]
where $\hat{f},\hat{M}$ denote the Fourier transforms of $f,M$ on $[-2,2]$. 
\end{lemma}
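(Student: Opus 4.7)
The plan is to compute $\hat M(k)$ directly via Fubini, then express everything in terms of $\hat f(k)$ using the relation $g = 1 - f$ on $[-1,1]$.

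First, I would swap the order of integration in
\[ \hat M(k) = \frac{1}{4}\int_{-2}^2 e^{-\pi i k x/2}\!\int_{-1}^1 f(t) g(x+t)\, dt\, dx, \]
getting $\hat M(k) = \frac{1}{4}\int_{-1}^1 f(t)\bigl(\int_{-2}^2 e^{-\pi i k x/2} g(x+t)\, dx\bigr)\, dt$. In the inner integral substitute $y = x+t$; since $g$ is supported on $[-1,1]$ and $[-1,1]\subset[-2+t,2+t]$ for every $t\in[-1,1]$, the inner integral equals $e^{\pi i k t/2}\int_{-2}^2 e^{-\pi i k y/2} g(y)\, dy = 4\, e^{\pi i k t/2}\hat g(k)$. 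Pulling this out and recognizing the remaining integral as $4\,\overline{\hat f(k)}$ yields the clean identity
\[ \hat M(k) = 4\, \hat g(k)\, \overline{\hat f(k)}. \]

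Next I would compute $\hat g(k)$ using $g(x) = (1-f(x))\mathbf{1}_{[-1,1]}(x)$. This gives $\hat g(k) = \frac{1}{4}\int_{-1}^1 e^{-\pi i k x/2}\, dx - \hat f(k)$. For $k \neq 0$ the first term evaluates, via the standard antiderivative of the exponential and $e^{i\theta}-e^{-i\theta}=2i\sin\theta$, to $\frac{\sin(k\pi/2)}{k\pi/1}$ — explicitly $\hat g(k) = \frac{\sin(k\pi/2)}{k\pi}-\hat f(k)$. Substituting into $\hat M(k) = 4\hat g(k)\overline{\hat f(k)}$ produces exactly
\[ \hat M(k) = \frac{4}{k\pi}\sin(k\pi/2)\,\overline{\hat f(k)} - 4|\hat f(k)|^2. \]

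The whole argument is essentially a convolution-becomes-product calculation (identifying $M(x)=(\tilde f\ast g)(x)$ with $\tilde f(t)=f(-t)$), combined with the simple evaluation of the Fourier coefficient of the indicator $\mathbf{1}_{[-1,1]}$. There is no real obstacle; the only thing to take care of is the bookkeeping of the normalization factor $1/4$ in the definition of $\hat{\cdot}$ (which changes the usual convolution identity from $\widehat{u\ast v}=\hat u\hat v$ to a version with a factor of $4$) and the restriction $k\neq 0$, needed so that $\int_{-1}^1 e^{-\pi i k x/2}\,dx$ can be evaluated by the exponential antiderivative rather than trivially as $2$.
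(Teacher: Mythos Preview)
Your proof is correct and follows essentially the same route as the paper: swap the order of integration to obtain $\hat M(k)=4\,\overline{\hat f(k)}\,\hat g(k)$, then write $\hat g(k)=\hat{\mathbf 1}_{[-1,1]}(k)-\hat f(k)$ and evaluate $\hat{\mathbf 1}_{[-1,1]}(k)=\frac{\sin(k\pi/2)}{k\pi}$. Your write-up is in fact slightly more careful, explicitly noting the support condition that justifies the substitution and why $k\neq 0$ is needed.
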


\begin{proof} Denote the indicator function of $[-1,1]$ by $\ind_{[-1,1]}$. We have
\begin{align*}
\hat{M}(k) &= \frac{1}{4}\int_{-2}^2 e^{-\frac{\pi i}{2} kx} M(x) \ dx =\frac{1}{4}\int_{-2}^2 e^{-\frac{\pi i}{2} kx} \int_{-1}^1 f(t) g(x+t) \ dt \ dx \\
& = \frac{1}{4}\int_{-1}^1 e^{\frac{\pi i}{2} kt} f(t) \int_{-2}^2 e^{-\frac{\pi i}{2}k(x+t)} g(x+t) \ dx \ dt  = 4\ol{\hat{f}(k)}\hat{g}(k) \\
& = 4\ol{\hat{f}(k)} \left( \hat{\ind}_{[-1,1]}(k) - \hat{f}(k) \right) .
\end{align*}
Combining the above with 
\[ \hat{\ind}_{[-1,1]}(k) = \frac{1}{4} \int_{-1}^1 e^{-\frac{\pi i }{2} kx} =\frac{1}{k\pi } \sin \left(k\pi /2 \right),  \]
gives the claimed identity.
\end{proof}

It will be advantageous to work with real Fourier coefficients in our programs of the later sections. We will derive identities relating the coefficients of the sine-cosine Fourier series of $f(x)$ as a function on $[-1,1]$ with the sine-cosine Fourier series of $f(x)$ and $M(x)$ as functions on $[-2,2]$. Let 
\begin{equation}\label{trigf1} f(x) = \frac{1}{2} + \sum_{k =1}^\infty c_k \cos (k \pi x ) +\sum_{k =1}^\infty d_k \sin (k \pi x ) , \end{equation}
be the sine-cosine Fourier series of $f(x)$ on $[-1,1]$. It will be notationally helpful to put $c_0 = 1/2$. Also let
\[ f(x) = \frac{1}{4} + \sum_{k =1}^\infty a_k \cos (k \pi x/2 ) +\sum_{k =1}^\infty b_k \sin (k \pi x/2 ) , \quad \text{and}\]
\begin{equation}\label{trigM} M(x) = \frac{1}{4} + \sum_{k =1}^\infty A_k \cos (k \pi x/2 ) +\sum_{k =1}^\infty B_k \sin (k \pi x/2 ) \end{equation}
be the sine-cosine Fourier series of $f$ and $M$ on $[-2,2]$.

\begin{lemma}\label{fourel} Let $f,M$ be as in \eqref{fgm} and their sine-cosine Fourier series be as in \eqref{trigf1} and \eqref{trigM}. Then for all $m \geq 1$:

\begin{equation}\label{am} a_m = \begin{cases} \frac{1}{2}c_{m/2} & \text{if }m \text{ is even} \\ 
\frac{2m\sin(\pi m/2)}{\pi} \sum_{k=0}^\infty \frac{(-1)^k }{m^2-4k^2} c_k & \text{if }m \text{ is odd.} \end{cases}\end{equation}

\begin{equation}\label{bm} b_m = \begin{cases} \frac{1}{2}d_{m/2} & \text{if }m \text{ is even} \\ 
\frac{4\sin(\pi m/2)}{\pi} \sum_{k=1}^\infty \frac{k(-1)^k }{m^2-4k^2} d_k & \text{if }m \text{ is odd.} \end{cases}\end{equation}

\begin{equation}\label{Am} A_m = \frac{4 \sin \left(m\pi /2 \right)}{m \pi}  a_m - 2(a_m^2+b_m^2), \text{ and in particular } A_{2m} \leq 0.\end{equation}
\begin{equation}\label{Bm} B_m = -\frac{4}{m \pi}\sin\left( m\pi /2 \right) b_m, \text{ and in particular } B_{2m} = 0.\end{equation}

\end{lemma}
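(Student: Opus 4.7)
The plan is to prove the four identities in the order stated, treating \eqref{am}, \eqref{bm} as a direct computation from the definitions of the two Fourier series, and then \eqref{Am}, \eqref{Bm} as corollaries of Lemma~\ref{expfou} after converting complex coefficients to real ones.

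For \eqref{am} and \eqref{bm}, I would begin from the standard formula
\[ a_m = \frac{1}{2}\int_{-2}^{2} f(x)\cos(m\pi x/2)\,dx = \frac{1}{2}\int_{-1}^{1} f(x)\cos(m\pi x/2)\,dx, \]
using that $f$ is supported on $[-1,1]$. When $m=2k$ is even, $\cos(m\pi x/2)=\cos(k\pi x)$, and the definition of $c_k$ immediately gives $a_{2k}=c_k/2$; the same argument with $\sin$ handles $b_{2k}=d_k/2$. When $m$ is odd I would substitute the series \eqref{trigf1} for $f$ and interchange sum and integral. Cross terms of the form $\int_{-1}^{1}\sin(k\pi x)\cos(m\pi x/2)\,dx$ vanish by parity. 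The surviving integrals
\[ \int_{-1}^{1}\cos(k\pi x)\cos(m\pi x/2)\,dx, \qquad \int_{-1}^{1}\sin(k\pi x)\sin(m\pi x/2)\,dx \]
I would evaluate via the product-to-sum identities. Using $\sin((k\pm m/2)\pi) = \pm(-1)^{k+1}\sin(m\pi/2)$ for odd $m$, each integral collapses to a closed form with denominator $m^2-4k^2$, producing the claimed series for $a_m$ and $b_m$. The constant term $\frac12$ in \eqref{trigf1} contributes exactly the $k=0$ summand $\frac{\sin(m\pi/2)}{m\pi}$, which is why setting $c_0=1/2$ lets us fold it cleanly into the sum starting at $k=0$.

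For \eqref{Am} and \eqref{Bm}, I would translate Lemma~\ref{expfou} into the sine--cosine basis. Writing $\hat f(k) = (a_k - i b_k)/2$ for $k\geq 1$ (and analogously for $\hat M$), so that $|\hat f(k)|^2 = (a_k^2+b_k^2)/4$ and $\overline{\hat f(k)} = (a_k+ib_k)/2$, the identity
\[ \hat{M}(k) = \frac{4}{k\pi}\sin(k\pi/2)\,\overline{\hat f(k)} - 4|\hat f(k)|^2 \]
separates into real and imaginary parts. Matching real parts with $A_k/2$ yields \eqref{Am}, and matching imaginary parts (up to the sign from $\hat M(k)=(A_k-iB_k)/2$) yields \eqref{Bm}. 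The final assertions follow by substituting $k=2m$: then $\sin(k\pi/2)=\sin(m\pi)=0$, killing the linear term and leaving $A_{2m}=-2(a_{2m}^2+b_{2m}^2)\leq 0$ and $B_{2m}=0$.

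The only step that requires any care is the product-to-sum computation for odd $m$: keeping track of the signs $(-1)^k$, verifying that the $k=0$ boundary term matches the general formula under the convention $c_0=1/2$, and rewriting $k^2-m^2/4$ as $(4k^2-m^2)/4$ to obtain the exact denominator $m^2-4k^2$ claimed. Everything else is bookkeeping once Lemma~\ref{expfou} is in hand.
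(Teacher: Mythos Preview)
Your proposal is correct and follows essentially the same approach as the paper: evaluate the coupling integrals $\int_{-1}^{1}\cos(k\pi x)\cos(m\pi x/2)\,dx$ (and the sine analogues) to obtain \eqref{am}, \eqref{bm}, and then feed Lemma~\ref{expfou} through the change of basis $\hat f(m)=(a_m-ib_m)/2$, $\hat M(m)=(A_m-iB_m)/2$ to get \eqref{Am}, \eqref{Bm}. The only cosmetic difference is that the paper extracts $A_m$ and $B_m$ via the combinations $\hat M(m)\pm\hat M(-m)$ rather than by reading off real and imaginary parts of $\hat M(m)$ alone, and it explicitly cites Lemma~\ref{tailbounds} to justify convergence of the odd-$m$ series---a point you allude to only implicitly when you interchange sum and integral.
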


\begin{proof} Note that the denominators in equations~\eqref{am} and \eqref{bm} are never zero since $m$ is an odd integer while $4k^2$ is even. The convergence of the two infinite sums in \eqref{am} and \eqref{bm} is implied by Lemma~\ref{tailbounds} below. To derive \eqref{am} we use the following integral identities, valid for all $m \geq 1$.
\[ \int_{-1}^1 \cos(\pi m x/2) \cos (\pi k x) \ dx = \begin{cases} 1 & \text{ if } m \text{ is even and } k = m/2 \\ 0&  \text{ if } m \text{ is even and } k \neq m/2 \\ \frac{4m (-1)^k \sin(\pi m /2)}{\pi m^2 - 4\pi k^2 }& \text{ if } m \text{ is odd. } \end{cases}\]
We also have the identity $\int_{-1}^1 \cos(\pi m x/2) \sin (\pi k x) \ dx = 0$ for all $k,m \in \mathbb{Z}$. Combining these identities with 
\[ a_m = \frac{1}{2} \int_{-1}^1 \cos(\pi m x /2) f(x) \ dx, \]
gives \eqref{am}. Similarly, we have for all $m \geq 1$ the integral identity
\[ \int_{-1}^1 \sin(\pi m x/2) \sin (\pi k x) \ dx = \begin{cases} 1 & \text{ if } m \text{ is even and } k = m/2 \\ 0&  \text{ if } m \text{ is even and } k \neq m/2 \\ \frac{8k (-1)^k\sin(\pi m /2)}{\pi m^2 - 4\pi k^2 }& \text{ if } m \text{ is odd. } \end{cases}\]
For all $k,m \in \mathbb{Z}$ we have $\int_{-1}^1 \sin(\pi m x/2) \cos(\pi k x) \ dx = 0$. Combining this and the above with 
\[ b_m = \frac{1}{2} \int_{-1}^1 \sin(\pi m x /2) f(x) \ dx, \]
gives \eqref{bm}. To derive \eqref{Am} and \eqref{Bm} we need relations between the exponential Fourier coefficients and the sine-cosine Fourier coefficients. These are
\[ 2\hat{f}(m) = a_m -ib_m , \quad 2\hat{f}(-m) = a_m +ib_m , \quad A_m = \hat{M}(m) + \hat{M}(-m), \quad B_m = i(\hat{M}(m)-\hat{M}(-m)).\]
By Lemma~\ref{expfou} and the above identities we have 
\begin{align*}
A_m &= \hat{M}(m)+\hat{M}(-m) \\
& = \frac{4}{m \pi}\sin(k \pi /2)\left( \overline{\hat{f}(m) }+\overline{\hat{f}(-m) } \right)- 4 \left( |\hat{f}(m) |^2+|\hat{f}(-m) |^2\right)\\
& = \frac{4 \sin \left(m\pi /2 \right)}{m \pi}  a_m - 2(a_m^2+b_m^2).\end{align*}
Equation \eqref{Bm} is derived similarly. 
\end{proof}

The identities in Lemma~\ref{fourel} will be used to construct constraints in our later programs. In order to work with the infinite sums of \eqref{am} and \eqref{bm} we will bound the tails of the series using Parseval's identity and the Cauchy-Schwarz inequality.

\begin{lemma}\label{tailbounds} Let $T$ be a positive integer, and $f(x)$ be as in \eqref{fgm} with sine-cosine Fourier series \eqref{trigf1}. Then for all integers $1 \leq m < 2T$ we have

\[ \left|\frac{2m}{\pi} \sum_{k=T+1}^\infty \frac{(-1)^k \sin(\pi m/2)}{m^2-4k^2} c_k\right| \leq \frac{1}{4-m^2/T^2} \cdot \frac{2m}{\pi \sqrt{6T^3}},\]
and
\[ \left|\frac{4}{\pi} \sum_{k=T+1}^\infty \frac{k(-1)^k \sin(\pi m/2)}{m^2-4k^2} d_k\right| \leq \frac{1}{4-m^2/T^2}\cdot \frac{4}{\pi\sqrt{2T}}.  \]

\end{lemma}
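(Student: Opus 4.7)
My plan is to prove both inequalities by the same three-step scheme: (i) pull the $|\sin(\pi m/2)|\le 1$ and $|(-1)^k|=1$ factors out, (ii) apply Cauchy--Schwarz to split each sum into an $\ell^2$ factor on the Fourier coefficients times an $\ell^2$ factor on the deterministic weights, and (iii) bound the Fourier-coefficient factor via Parseval and the weight factor by integral comparison.

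For the $\ell^2$ bound on the coefficients, I would note that $f\in[0,1]$ with $\int_{-1}^1 f=1$ forces $\int_{-1}^1 f^2 \le \int_{-1}^1 f = 1$. Applying Parseval to the series \eqref{trigf1} (using $\int_{-1}^1 \cos^2(k\pi x)\,dx = \int_{-1}^1 \sin^2(k\pi x)\,dx = 1$ for $k\ge 1$) gives
\[
\tfrac12 + \sum_{k\ge 1} c_k^2 + \sum_{k\ge 1} d_k^2 \;=\; \int_{-1}^1 f(x)^2\,dx \;\le\; 1,
\]
so $\sum c_k^2 \le 1/2$ and $\sum d_k^2 \le 1/2$; the $\ell^2$ tails are therefore at most $1/\sqrt{2}$.

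For the weight factor I would use that $k\ge T+1 > T \ge m/2$ implies $4k^2 - m^2 \ge \tfrac{k^2}{T^2}(4T^2-m^2) > 0$ (factor out $4k^2$ and use monotonicity of $m^2/(4k^2)$). This gives the clean estimates
\[
\sum_{k=T+1}^\infty \frac{1}{(4k^2-m^2)^2} \;\le\; \frac{T^4}{(4T^2-m^2)^2}\sum_{k=T+1}^\infty \frac{1}{k^4}, \qquad \sum_{k=T+1}^\infty \frac{k^2}{(4k^2-m^2)^2} \;\le\; \frac{T^4}{(4T^2-m^2)^2}\sum_{k=T+1}^\infty \frac{1}{k^2},
\]
and the remaining pure power-sums are bounded by integral comparison: $\sum_{k>T} k^{-4} \le 1/(3T^3)$ and $\sum_{k>T} k^{-2} \le 1/T$.

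Combining everything, the first bound picks up $\tfrac{2m}{\pi}\cdot\tfrac{1}{\sqrt{2}}\cdot\tfrac{T^{1/2}}{\sqrt{3}(4T^2-m^2)}$, which after rewriting $4T^2-m^2 = T^2(4-m^2/T^2)$ is exactly $\frac{1}{4-m^2/T^2}\cdot \frac{2m}{\pi\sqrt{6T^3}}$; similarly the second bound collapses to $\frac{1}{4-m^2/T^2}\cdot \frac{4}{\pi\sqrt{2T}}$. I do not anticipate a real obstacle here; the only thing that requires care is the algebraic rewriting $4T^2 - m^2 = T^2(4 - m^2/T^2)$ to match the exact constants in the stated bounds, and the verification that the lower bound on $4k^2-m^2$ is valid (which uses $m < 2T$, exactly the hypothesis given).
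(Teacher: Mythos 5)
Your proposal is correct and follows essentially the same route as the paper: Parseval's identity (using $\int_{-1}^1 f^2 \le \int_{-1}^1 f = 1$) to get $\sum_k (c_k^2+d_k^2)\le 1/2$, the bound $4k^2-m^2 \ge \frac{k^2}{T^2}(4T^2-m^2)$ for $k>T$, Cauchy--Schwarz, and integral comparison for the tails $\sum_{k>T}k^{-4}$ and $\sum_{k>T}k^{-2}$. The only (cosmetic) difference is that the paper factors out $(4-m^2/T^2)^{-1}$ before applying Cauchy--Schwarz to $\sum |c_k|/k^2$ and $\sum |d_k|/k$, whereas you apply Cauchy--Schwarz first and bound the resulting weight sums; the constants come out identical either way.
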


\begin{proof} Let $\hat{f}$ denote the Fourier transform of $f(x)$ on $[-1,1]$ defined for $k \in \mathbb{Z}$ by  
\[ \hat{f}(k) =\frac{1}{2} \int_{-1}^1 e^{-i\pi k x}f(x) \ dx.\]
For $k \geq 1$ we have $\hat{f}(k) = (c_k-id_k)/2$, and $f(-k) =(c_k+id_k)/2$. By Parseval's identity and the fact $\hat{f}(0) = 1/2$ we obtain 
\begin{equation}\label{parseval} 1 \geq \int_{-1}^1 f^2(x) \ dx = 2 \sum_{k \in \mathbb{Z}} |\hat{f}(k)|^2 = \frac{1}{2} + \sum_{k=1}^\infty (c_k^2+d_k^2).\end{equation}
Fix an integer $1 \leq m < 2T$. By the triangle inequality
\[ \left| \sum_{k=T+1}^\infty \frac{(-1)^k \sin(\pi m/2)}{m^2-4k^2} c_k\right|   \leq \sum_{k=T+1}^\infty \frac{|c_k|}{4k^2-m^2} \leq \frac{1}{4-m^2/T^2} \sum_{k=T+1}^\infty \frac{|c_k|}{k^2 } .\]
By the Cauchy-Schwarz inequality and \eqref{parseval}
\[ \left(\sum_{k=T+1}^\infty  \frac{|c_k|}{k^2 } \right)^2 \leq \left(\sum_{k=T+1}^\infty  c_k^2 \right)\left(\sum_{k=T+1}^\infty  \frac{1}{k^4 } \right) \leq \frac{1}{2}\int_T^\infty \frac{1}{x^4} \ dx = \frac{1}{6T^3}. \]

Combining the two last above lines gives the first tail estimate. We proceed similarly for the second. By the triangle inequality
\[ \left| \sum_{k=T+1}^\infty \frac{k(-1)^k \sin(\pi m/2)}{m^2-4k^2} d_k\right|   \leq \sum_{k=T+1}^\infty \frac{k|d_k|}{4k^2-m^2} \leq \frac{1}{4-m^2/T^2} \sum_{k=T+1}^\infty \frac{|d_k|}{k } .\]
By the Cauchy-Schwarz inequality and \eqref{parseval}
\[ \left(\sum_{k=T+1}^\infty  \frac{|d_k|}{k} \right)^2 \leq \left(\sum_{k=T+1}^\infty  d_k^2 \right)\left(\sum_{k=T+1}^\infty  \frac{1}{k^2 } \right) \leq \frac{1}{2}\int_T^\infty \frac{1}{x^2} \ dx = \frac{1}{2T}. \]
Combining the two last above lines gives the second tail estimate.

\end{proof}

\subsection{Properties using average values of $M(x)$ on small intervals }

In our convex program, the Fourier coefficients $c_k,d_k$ described above will be variables. The other main type of variable will represent average values of $M(x)$ on small intervals. Throughout the remainder of this work $N$ will denote a large positive integer and we'll also define $L = 2/N$. For each $1 \leq j \leq N$ define 
\[ w_j = \frac{1}{L} \int_{(j-1)L}^{jL} M(x) \ dx\quad \text{and} \quad v_j = \frac{1}{L} \int_{-jL}^{-(j-1)L} M(x) \ dx.\]

Note that 

\begin{equation}\label{mmass} 1=  \int_{-2}^2 M(x) \ dx = \sum_{j=1}^N \int_{(j-1)L}^{jL} (M(x)+M(-x)) \ dx = L\sum_{j=1}^N (w_j+v_j).\end{equation}

In this subsection we will estimate the Fourier coefficients, the second moment, and the mean of $M(x)$ using the average values $\{w_j,v_j\}_{j=1}^N$. Let $R$ be a positive integer. For all $1 \leq j \leq N$ and $1 \leq m \leq R$, let $\alpha^+_{j,m}$ and $\alpha^-_{j,m}$ be upper and lower bounds of $\cos (\pi m x/2)$ on the interval $[(j-1)L,jL]$, i.e.
\begin{equation}\label{alphaarray} \alpha_{j,m}^+ \geq \max_{(j-1)L \leq x \leq jL} \cos (\pi m x/2) \quad \text{and}\quad \alpha_{j,m}^- \leq \min_{(j-1)L \leq x \leq jL} \cos (\pi m x/2).\end{equation}
Similarly, define $\beta^+_{j,m}$ and $\beta^-_{j,m}$ to be upper and lower bounds of $\sin (\pi m x/2)$ on the same intervals:
\begin{equation}\label{betaarray} \beta_{j,m}^+ \geq \max_{(j-1)L \leq x \leq jL} \sin (\pi m x/2) \quad \text{and}\quad \beta_{j,m}^- \leq \min_{(j-1)L \leq x \leq jL} \sin (\pi m x/2).\end{equation}

Next, we use the arrays $\{\alpha^+_{j,m},\alpha^-_{j,m}\}$ and $\{\beta^+_{j,m},\beta^-_{j,m}\}$ to give estimates on the Fourier coefficients of $M(x)$.

\begin{lemma}\label{trigintervallem} For all $1 \leq m \leq R$
\[\frac{L}{2}  \sum_{j=1}^N \alpha_{j,m}^-(w_j+v_j)  \leq A_m \leq \frac{L}{2} \sum_{j=1}^N \alpha_{j,m}^+(w_j+v_j). \]
\[ \frac{L}{2} \sum_{j=1}^N (\beta_{j,m}^-w_j-\beta_{j,m}^+v_j) \leq B_m \leq \frac{L}{2} \sum_{j=1}^N (\beta_{j,m}^+w_j-\beta_{j,m}^-v_j). \]
\end{lemma}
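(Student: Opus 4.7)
The plan is to start from the Fourier coefficient formulas
\[ A_m = \frac{1}{2}\int_{-2}^{2} \cos(\pi m x/2)\,M(x)\,dx, \qquad B_m = \frac{1}{2}\int_{-2}^{2} \sin(\pi m x/2)\,M(x)\,dx, \]
which follow from \eqref{trigM}, and then break each integral into pieces on which $M$ is nonnegative and the trigonometric factor is controlled by the arrays in \eqref{alphaarray}, \eqref{betaarray}. First I would split each integral at $0$. Using the parity of $\cos$ and the substitution $x\mapsto -x$ on $[-2,0]$, I rewrite
\[ A_m = \tfrac{1}{2}\int_{0}^{2}\cos(\pi m x/2)\bigl(M(x)+M(-x)\bigr)\,dx, \]
and analogously, using oddness of $\sin$,
\[ B_m = \tfrac{1}{2}\int_{0}^{2}\sin(\pi m x/2)\,M(x)\,dx \;-\; \tfrac{1}{2}\int_{0}^{2}\sin(\pi m x/2)\,M(-x)\,dx. \]

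Next I would partition $[0,2]$ into the $N$ subintervals $[(j-1)L,jL]$. On the $j$-th subinterval the definitions in \eqref{alphaarray}, \eqref{betaarray} give the pointwise bounds $\alpha_{j,m}^- \le \cos(\pi m x/2)\le \alpha_{j,m}^+$ and $\beta_{j,m}^- \le \sin(\pi m x/2)\le \beta_{j,m}^+$. Since $M\ge 0$, these pointwise bounds can be multiplied through and integrated. The key identities are
\[ \int_{(j-1)L}^{jL} M(x)\,dx = L w_j, \qquad \int_{(j-1)L}^{jL} M(-x)\,dx = \int_{-jL}^{-(j-1)L} M(y)\,dy = L v_j, \]
so on the $j$-th piece the $\cos$-integral lies between $L\alpha_{j,m}^-(w_j+v_j)$ and $L\alpha_{j,m}^+(w_j+v_j)$. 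Summing over $j$ and halving proves the stated inequality for $A_m$.

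For $B_m$ the only subtlety is sign tracking, and this is the main place one must be careful. Because the factor of interest is $M(x)-M(-x)$, one should \emph{not} combine $M(x)$ and $M(-x)$ and then try to bound $\sin(\pi m x/2)$ from above or below; such a bound would fail for $x$ at which $M(x)-M(-x)<0$. Instead one bounds the two integrals separately, each time using only the nonnegativity of $M$: on the $j$-th subinterval
\[ L\beta_{j,m}^- w_j \le \int_{(j-1)L}^{jL}\sin(\pi m x/2)M(x)\,dx \le L\beta_{j,m}^+ w_j, \]
and the same with $v_j$ in place of $w_j$ for the $M(-x)$ integral. Subtracting the second from the first flips the roles of $\beta_{j,m}^+$ and $\beta_{j,m}^-$ in front of $v_j$, producing exactly $L(\beta_{j,m}^- w_j-\beta_{j,m}^+ v_j)$ and $L(\beta_{j,m}^+ w_j-\beta_{j,m}^- v_j)$ as lower and upper bounds respectively. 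Summing over $j$ and dividing by $2$ yields the stated bounds on $B_m$.
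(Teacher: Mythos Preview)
Your proof is correct and follows essentially the same approach as the paper: split at $0$, use parity of $\cos$ and $\sin$, partition $[0,2]$ into the subintervals $[(j-1)L,jL]$, and exploit $M\ge 0$ together with the pointwise bounds \eqref{alphaarray}, \eqref{betaarray}. The only cosmetic difference is that for $B_m$ the paper bounds the product $\sin(\pi m x/2)(M(x)-M(-x))$ pointwise in a single step, whereas you bound the $M(x)$ and $M(-x)$ integrals separately and then subtract; these are equivalent.
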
 

\begin{proof} Let $m \geq 1$. We can break the integrals defining $A_r$ into a sum of integrals on the small intervals:
\begin{align} A_m &=\frac{1}{2}  \int_{-2}^2 \cos(\pi m x/2) M(x) \ dx \nonumber\\
& =  \frac{1}{2} \int_{-2}^2 \cos(\pi m x/2) (M(x) +M(-x))\ dx \nonumber\\
& = \frac{1}{2} \sum_{j=1}^N \int_{(j-1)L}^{jL}  \cos(\pi m x/2) (M(x) +M(-x))\ dx .  \label{2Arsum} \end{align}
By definition of $\alpha_{j,m}^+$, for all $1 \leq j \leq N$ we have 
\[ \int_{(j-1)L}^{jL}  \cos(\pi m x/2) (M(x) +M(-x))\ dx \leq \alpha_{j,m}^+ \int_{(j-1)L}^{jL} (M(x) +M(-x)) \ dx = L\alpha_{j,m}^+(w_j+v_j).\]
Substituting the above back into \eqref{2Arsum} gives the stated upper bound on $A_m$. The lower bound is similar. We can also break $B_m$ into the following sum of integrals.
\[ B_m = \frac{1}{2} \sum_{j=1}^N \int_{(j-1)L}^{jL}  \sin(\pi m x/2) (M(x) -M(-x))\ dx .\]
For all $1 \leq j \leq N$ and $(j-1)L \leq x \leq jL$ we have 
\[ \beta_{j,m}^- M(x) -\beta_{j,m}^+ M(-x)\leq \sin(\pi m x/2) (M(x) -M(-x)) \leq \beta_{j,m}^+ M(x) -\beta_{j,m}^- M(-x). \]
Substituting these estimates into the above expression for $B_m$ gives the stated upper and lower bounds on $B_m$.

\end{proof}

Define the mean of $f,g,M$ as follows. 
\[ E(f) = \int_{-1}^1 xf(x) \ dx,   \quad E(g) = \int_{-1}^1 xg(x) \ dx,   \quad E(M)= \int_{-2}^2 xM(x) \ dx. \]

We will use a bound on the second moment of $M(x)$. A similar bound is used in the work of Moser and Murdeshwar~\cite{M2}.

\begin{lemma}\label{centralM} For all $M$ as in \eqref{fgm} we have
\[ \int_{-2}^2 x^2M(x) \ dx = \frac{2}{3} + \frac{1}{2}E(M)^2.\]
\end{lemma}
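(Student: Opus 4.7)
The plan is to expand the left hand side by substituting the convolution definition of $M$ and reducing everything to moments of $f$ and $g$ on $[-1,1]$.

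First I would write
\[
\int_{-2}^2 x^2 M(x)\,dx = \int_{-2}^2 x^2 \int_{-1}^1 f(t)\,g(x+t)\,dt\,dx,
\]
swap the order of integration, and substitute $u = x+t$. Since $g$ is supported on $[-1,1]$ and for every $t\in[-1,1]$ we have $[-1,1]\subset[-2+t,2+t]$, the $u$-integration can be taken over $[-1,1]$, giving
\[
\int_{-2}^2 x^2 M(x)\,dx = \int_{-1}^1 f(t)\int_{-1}^1 (u-t)^2\,g(u)\,du\,dt.
\]
Expanding $(u-t)^2 = u^2-2ut+t^2$ and using $\int_{-1}^1 f = \int_{-1}^1 g = 1$ (the second equality following from $f+g=1$), this becomes
\[
\int_{-1}^1 u^2 g(u)\,du \;-\; 2E(f)E(g) \;+\; \int_{-1}^1 t^2 f(t)\,dt.
\]

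Next I would use $f(x)+g(x)=1$ on $[-1,1]$ to simplify the sum of second moments:
\[
\int_{-1}^1 t^2 f(t)\,dt + \int_{-1}^1 u^2 g(u)\,du = \int_{-1}^1 x^2\,dx = \tfrac{2}{3}.
\]
So the identity reduces to showing $-2E(f)E(g) = \tfrac{1}{2}E(M)^2$.

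For this I would compute $E(M)$ in the same fashion: with the same change of variables,
\[
E(M) = \int_{-1}^1 f(t)\int_{-1}^1 (u-t)\,g(u)\,du\,dt = E(g) - E(f),
\]
again using $\int_{-1}^1 f = \int_{-1}^1 g = 1$. Meanwhile $f+g=1$ gives $E(f)+E(g) = \int_{-1}^1 x\,dx = 0$, so $E(g) = -E(f)$, and hence $E(f) = -E(M)/2$ and $E(g) = E(M)/2$. Therefore
\[
-2E(f)E(g) = -2\bigl(-\tfrac{E(M)}{2}\bigr)\bigl(\tfrac{E(M)}{2}\bigr) = \tfrac{1}{2}E(M)^2,
\]
which completes the proof.

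The argument is purely a bookkeeping exercise once the convolution is unfolded; no step presents a genuine obstacle. The only place one must be careful is justifying that after the substitution $u=x+t$ the $u$-integral can be taken over $[-1,1]$ (rather than the shifted interval $[-2+t,2+t]$), which follows immediately from the support of $g$.
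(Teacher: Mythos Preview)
Your proof is correct and follows essentially the same approach as the paper: both unfold the convolution, change the order of integration, expand the square to reduce to second moments of $f$ and $g$ and the cross term $-2E(f)E(g)$, use $f+g=1$ to evaluate $\int x^2(f+g)=2/3$, and then use $E(f)=-E(g)$ together with $E(M)=E(g)-E(f)$ to identify the cross term as $\tfrac12 E(M)^2$. The only cosmetic difference is that you substitute $u=x+t$ before expanding, whereas the paper writes $x^2=(x+t)^2-2t(x+t)+t^2$ directly.
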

\begin{proof} By changing the order of integration, we obtain 
\[ E(M) = \int_{-1}^1 f(t) \int_{-2}^2 (x+t)g(x+t) \ dx  dt- \int_{-1}^1 tf(t) \int_{-2}^2 g(x+t) \ dx  dt  = E(g)-E(f).\]
A similar change of order gives
\begin{align*} \int_{-2}^2 x^2M(x) \ dx &= \int_{-1}^1 f(t) \int_{-2}^2 x^2g(x+t) \ dxdt \\
& =\int_{-1}^1 f(t) \int_{-2}^2 ((x+t)^2-2t(x+t)+t^2)g(x+t) \ dxdt\\
&  = \int_{-1}^1 x^2 g(x) \ dx -2E(f)E(g) + \int_{-1}^1 t^2 f(t) \ dt \\
& =\int_{-1}^1 x^2(g(x)+f(x)) \ dx - 2E(f)E(g) = \frac{2}{3}-2E(f)E(g). \end{align*}

A simple calculation shows that $E(f) = -E(g)$ and so $E(f) = -E(M)/2$. Substituting these identities into the above gives the claimed result.

\end{proof}

Now we give an estimation of the mean and second moment of $M(x)$ using $\{w_j,v_j\}_{j=1}^N$.

\begin{lemma}\label{momboundslem} Let $h_1\leq h_2$ be real numbers such that $h_1 \leq E(M) \leq h_2$. Then
  \[ h_1\leq L^2 \sum_{j=1}^{N}  (jw_j-(j-1)v_j) \quad \text{and} \quad  L^2 \sum_{j=1}^{N}  ((j-1)w_j-jv_j) \leq h_2 ,\]
  and 
\[    2/3+h_1^2/2 \leq L^3 \sum_{j=1}^{N}  j^2 (w_j +v_j) \quad \text{and} \quad L^3 \sum_{j=1}^{N}  (j-1)^2 (w_j +v_j)\leq 2/3+h_2^2/2.\]

\end{lemma}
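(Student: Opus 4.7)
The plan is to split each of $E(M)=\int_{-2}^{2}xM(x)\,dx$ and $\int_{-2}^{2}x^{2}M(x)\,dx$ as a sum of integrals over the positive subintervals $[(j-1)L,jL]$ and the negative subintervals $[-jL,-(j-1)L]$, and on each piece replace the polynomial factor ($x$ or $x^{2}$) by its maximum and minimum on that subinterval. Since $M(x)\geq 0$, the replacements produce two-sided bounds expressed purely in terms of $w_j$ and $v_j$, which we then chain with $h_1\le E(M)\le h_2$ to obtain the stated inequalities.

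For the mean bounds, on $[(j-1)L,jL]$ the inequalities $(j-1)L\le x\le jL$ give
\[(j-1)L^{2}\,w_j \;\le\; \int_{(j-1)L}^{jL} x\,M(x)\,dx \;\le\; jL^{2}\,w_j,\]
while on $[-jL,-(j-1)L]$ the inequalities $-jL\le x\le -(j-1)L$ give
\[-jL^{2}\,v_j \;\le\; \int_{-jL}^{-(j-1)L} x\,M(x)\,dx \;\le\; -(j-1)L^{2}\,v_j.\]
Summing over $j$ sandwiches $E(M)$ between $L^{2}\sum_{j}((j-1)w_j-jv_j)$ and $L^{2}\sum_{j}(jw_j-(j-1)v_j)$. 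Combining with the hypothesis $h_1\le E(M)\le h_2$ yields the two claimed mean inequalities.

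For the second moment bounds, I would first invoke Lemma~\ref{centralM} to rewrite $\int_{-2}^{2}x^{2}M(x)\,dx = 2/3 + E(M)^{2}/2$. Repeating the partitioning argument, now using the uniform estimate $(j-1)^{2}L^{2}\le x^{2}\le j^{2}L^{2}$, which holds on both $[(j-1)L,jL]$ and its mirror image (this symmetry is why the $w_j$ and $v_j$ contributions merge into $w_j+v_j$), gives
\[L^{3}\sum_{j=1}^{N}(j-1)^{2}(w_j+v_j) \;\le\; \tfrac{2}{3} + \tfrac{1}{2}E(M)^{2} \;\le\; L^{3}\sum_{j=1}^{N} j^{2}(w_j+v_j).\]
The final step is to bound $E(M)^{2}$ from below by $h_1^{2}$ and from above by $h_2^{2}$ using $h_1\le E(M)\le h_2$ and monotonicity of squaring on a half-line; this is clean in the regime arising in the intended application (where $h_1,h_2$ do not straddle zero), after which substituting $h_1^{2}/2$ and $h_2^{2}/2$ delivers the quadratic inequalities. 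No step presents a serious obstacle: the only subtlety worth emphasizing is the sign bookkeeping in the first computation, and the observation that $x^{2}$ is symmetric about $0$ so that the positive and negative subintervals contribute identically in the second.
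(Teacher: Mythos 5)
Your proof is correct and follows essentially the same route as the paper: partition the integrals defining $E(M)$ and $\int_{-2}^2 x^2 M(x)\,dx$ over the subintervals, bound $x$ and $x^2$ by their extremes on each piece, and invoke Lemma~\ref{centralM} together with $h_1\le E(M)\le h_2$ (the paper folds $M(-x)$ into a single integral over $[(j-1)L,jL]$ rather than keeping the mirror intervals separate, but the estimates are identical). Your remark that passing from $h_1\le E(M)\le h_2$ to $h_1^2\le E(M)^2\le h_2^2$ needs $h_1\ge 0$ is a valid point the paper leaves implicit; it is guaranteed in the application by hypothesis (i) of Proposition~\ref{convexprop}.
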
 

\begin{proof} Once again we break down the appropriate integral into a sum of integrals. 
\[  E(M) = \int_{-2}^2 x M(x) \ dx = \sum_{j=1}^N  \int_{(j-1)L}^{jL} x(M(x)-M(-x)) \ dx .\]
For all $1 \leq j \leq N$ we can upper and lower bound the summand above.
\[ (j-1)L^2w_j -jL^2v_j \leq   \int_{(j-1)L}^{jL} x(M(x)-M(-x)) \ dx \leq jL^2w_j -(j-1)L^2v_j.\]
Substituting this back into the above expression for $E(M)$ and then applying $h_1 \leq E(M) \leq h_2$ gives the upper and lower bounds stated. Secondly for the second moment: 
\[ \int_{-2}^2 x^2  M(x) \ dx = \sum_{j=1}^N  \int_{(j-1)L}^{jL} x^2(M(x)+M(-x)) \ dx .\]
For all $1 \leq j \leq N$ we can again upper and lower bound the summand above.
\[ (j-1)^2L^3(w_j+v_j) \leq   \int_{(j-1)L}^{jL} x^2(M(x)-M(-x)) \ dx \leq j^2L^3(w_j+v_j).\]
Substituting this estimate into the above expression for the second moment gives 
\[ L^3 \sum_{j=1}^{N}  j^2 (w_j +v_j)\leq \int_{-2}^2 x^2  M(x) \ dx  \leq L^3 \sum_{j=1}^{N}  (j-1)^2 (w_j +v_j).\]
Applying Lemma~\ref{centralM} gives the two required inequalities.

\end{proof}


\section{Simplified linear program}\label{simpsec}

In this section we describe a simplified linear program version of our technique that is meant to convey the main idea of the more complex convex program in the following section. In this section we will derive a lower bound on $\| M \|_{\infty}$ under the additional assumption that $M$ is even, i.e. $M(x) = M(-x)$. The input of the linear program will be positive integers $N,R$ and $L = 2/N$. We will also explicitly choose values for the array $\{\alpha_{j,m}^-\}$ defined in \eqref{alphaarray}. Set 
\[\alpha_{j,m}^- = \cos(\pi mL(j-1/2)/2) - \pi m L/4,  \quad 1 \leq j \leq N, 1 \leq m \leq 2R.\]
Note that the above choice satisfies the definition \eqref{alphaarray} since the derivative of $\cos(\pi m x/2)$ is bounded in absolute value by $\pi m/2$. Our linear program is the following.

\begin{align}
\textsc{Input: } N,  R, L = 2/N \nonumber \\
\textsc{Variables: }& \Omega,w_1,\ldots,w_N  \nonumber \\
\textsc{Minimize: } &\Omega   \nonumber\\
\textsc{Subject to: } & 0 \leq w_j \leq \Omega; \quad 1 \leq j \leq N, \label{simplebound} \\
  & \sum_{j=1}^{N}w_j = N/4, \label{simplemass}\\
& \sum_{j=1}^{N} \alpha_{j,2m}^-w_j \leq 0; \quad 1 \leq m \leq R, \label{simplecos} \\
& L^3 \sum_{j=1}^{N}  (j-1)^2 w_j \leq 1/3.\label{simplemom}
\end{align}

\begin{prop} Let $N,R$ be arbitrary positive integers, and $M(x)$ be as in \eqref{fgm} with the additional property that $M(x)$ is even. If $\Omega^*$ is the optimum of the above program, then $\|M\|_{\infty} \geq \Omega^*$. 
\end{prop}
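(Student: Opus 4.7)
The plan is a standard LP feasibility argument: given any $M$ satisfying \eqref{fgm} with $M(x) = M(-x)$, I would construct a feasible solution of the linear program whose objective value is exactly $\|M\|_\infty$. Since $\Omega^*$ is the minimum of the objective over all feasible solutions, this immediately gives $\Omega^* \leq \|M\|_\infty$, which is the desired inequality.

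The construction I would use is the natural discretization already introduced in Section~\ref{setup}: set $w_j = \frac{1}{L}\int_{(j-1)L}^{jL} M(x)\, dx$ for $1 \leq j \leq N$, and take $\Omega = \|M\|_\infty$. The symmetry hypothesis $M(x) = M(-x)$ gives $v_j = w_j$ throughout, which is what allows the $v_j$ variables to be eliminated from the program. Constraint \eqref{simplebound} is immediate from $0 \leq M(x) \leq \|M\|_\infty$ pointwise. Constraint \eqref{simplemass} follows by substituting $v_j = w_j$ into identity \eqref{mmass}, giving $2L \sum_j w_j = 1$. For constraint \eqref{simplecos}, I would invoke the lower bound in Lemma~\ref{trigintervallem} at index $2m$, combined with $A_{2m} \leq 0$ from Lemma~\ref{fourel}, to deduce $L \sum_j \alpha^-_{j,2m} w_j \leq A_{2m} \leq 0$. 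For constraint \eqref{simplemom}, evenness of $M$ forces $E(M) = 0$, so Lemma~\ref{momboundslem} applies with $h_1 = h_2 = 0$, and after substituting $v_j = w_j$ it collapses to $L^3 \sum_j (j-1)^2 w_j \leq 1/3$.

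There is no genuine obstacle; the entire content of the proposition is simply that the linear program's constraints were set up precisely so that the discretization of any admissible even $M$ is automatically feasible, with $\Omega = \|M\|_\infty$ making \eqref{simplebound} hold. The only points worth flagging are the indexing choices: the program only imposes the cosine inequality at even integers $2m$, which is the sharp match with Lemma~\ref{fourel}, where the sign conclusion $A_{2m} \leq 0$ holds exactly for the even-indexed cosine coefficients, while the sine coefficients $B_m$ vanish under the evenness hypothesis so no sine constraints are needed. Similarly, the explicit formula for $\alpha^-_{j,m}$ stated just before the program is valid because the derivative of $\cos(\pi m x/2)$ is bounded by $\pi m/2$ in absolute value, so the definition \eqref{alphaarray} is satisfied; this is what makes the $\alpha^-_{j,2m}$ coefficients used in \eqref{simplecos} legitimate.
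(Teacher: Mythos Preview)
Your proposal is correct and follows essentially the same approach as the paper: both construct the feasible point $\Omega = \|M\|_\infty$, $w_j = \frac{1}{L}\int_{(j-1)L}^{jL} M(x)\,dx$ and verify the four constraint families in turn, using evenness to get $v_j = w_j$. The only cosmetic difference is that for \eqref{simplemom} the paper appeals directly to Lemma~\ref{centralM} (with $E(M)=0$ and evenness giving $\int_0^2 x^2 M(x)\,dx = 1/3$), whereas you invoke the packaged Lemma~\ref{momboundslem} with $h_1=h_2=0$; these amount to the same computation.
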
 

\begin{proof} Our strategy is to show that any even $M(x)$ gives a feasible assignment of variables such that $\Omega \leq \| M \|_{\infty}$. Let $M(x)$ be even, fix positive integers $N,R$. Put $\Omega = \|M \|_{\infty}$ and $w_j = \frac{1}{L} \int_{(j-1)L}^{jL} M(x) \ dx$. Clearly this assignment of variables satisfies the constraints in \eqref{simplebound}. Since $M(x)$ is even, we have
\[ \frac{1}{2} = \int_{0}^2 M(x) \ dx = \sum_{j=1}^N \int_{(j-1)L}^{jL} M(x) \ dx = L \sum_{j=1}^N w_j,\]
satisfying constraint \eqref{simplemass}. By Lemma~\ref{trigintervallem} and \eqref{Am} of Lemma~\ref{fourel}, for all $1 \leq m \leq R$ we have 
\[ \frac{L}{2} \sum_{j=1}^N \alpha_{j,2m}^- w_j \leq A_{2m} \leq 0 ,\]
satisfying constraint \eqref{simplecos}. Lastly, from Lemma~\ref{centralM} we have 
\[ L^3 \sum_{j=1}^{N}  (j-1)^2 w_j \leq \int_{0}^2 x^2 M(x) \ dx = 1/3,\]
satisfying constraint \eqref{simplemom}.

\end{proof} 

The optimum of this linear program increases with $N$ and $R$. A linear program with hundreds of constraints and variables can be solved exactly. For our linear program, it is best to choose $N \geq 2000$. At this scale of variables and constraints, exact-solvers are computationally slow, but numerical solvers are still very fast. The disadvantage of numerical solvers is that additional post-processing steps must be taken to ensure the reported results are correct. Our strategy will be to find a feasible point in the dual program, which will give a lower bound on the minimum to the primal problem. To guarantee our dual solution is truly feasible, we will check that all inequalities that define the dual space are strictly satisfied by a margin that exceeds the worst-case-scenario for floating-point rounding errors.

Choosing $N = 80000$ and $R = 20$ and running the dual program in CPLEX returns a feasible point in the dual space with objective 0.375169005340707. We have verified that all constraints are satisfied by a margin exceeding the worst-case floating point error accumulation. Therefore $\|M \|_{\infty} \geq 0.375$ for all even $M$. By increasing $N$ and $R$ this bound can be improved a little, but it seems that the limit of this approach is less than $0.3755$. In the next section we will introduce nonlinear constraints that will give a substantial improvement.



\section{Full convex program}\label{fullconsec}

In this section we give and discuss our main program that will be used to prove Theorem~\ref{T1}. The convex program we will use includes the ideas of the previous linear program, but will also add constraints coming from \eqref{Bm} which will be important since we no longer assume $M(x)$ is even. We will also add quadratic constraints coming from \eqref{Am}. There will be several inputs to our convex program. Let $N,R$ denote positive integers, their role will be roughly the same as in the simplified linear program. A positive integer $T$ will also be an input, its role is as in Lemma~\ref{tailbounds} and determines when we truncate certain infinite sums. Finally our last inputs will be real numbers $h_1<h_2,p_1<p_2,q_1<q_2$. These inputs will act as bounds on the mean of $M(x)$, first cosine, and first sine Fourier coefficient of $f(x)$. We will also explicitly choose values for the arrays $\{\alpha_{j,m}^-,\alpha_{j,m}^+\}$ and $\{\beta_{j,m}^-,\beta_{j,m}^+\}$ defined in \eqref{alphaarray} and \eqref{betaarray}. Set 
\[\alpha_{j,m}^- = \cos(\pi mL(j-1/2)/2) - \pi m L/4,  \quad \alpha_{j,m}^+ = \cos(\pi mL(j-1/2)/2) + \pi m L/4,\]
\[\beta_{j,m}^- = \sin(\pi mL(j-1/2)/2) - \pi m L/4, \quad \beta_{j,m}^+ = \sin(\pi mL(j-1/2)/2) +\pi m L/4,\]
for all $1 \leq j \leq N$ and $1 \leq m \leq 2R$. As above, these choices satisfy the definitions of \eqref{alphaarray} and \eqref{betaarray} since the derivatives of $\cos(\pi m x/2)$ and $\sin(\pi m x/2)$ are bounded by $\pi m/2$. Our full convex program is the following.


\begin{align}
\textsc{Input: } & N,L=2/N,T,R,h_1,h_2,p_1,p_2,q_1,q_2 \nonumber \\
\textsc{Variables: }& \Omega, \{w_j,v_j\}_{j=1}^N,\{c_k,d_k\}_{k=1}^T,\{\epsilon_{2m-1},\delta_{2m-1}\}_{j=1}^R,\nonumber \\
\textsc{Variable expressions: } &\text{For } 1 \leq m \leq 2R:\nonumber\\
& a_m = \begin{cases} \frac{1}{2}c_{m/2} & \text{if }m \text{ is even} \\ 
\epsilon_m+\frac{2m\sin(\pi m/2)}{\pi} \left( \frac{1}{2m^2}+ \sum_{k=1}^T \frac{(-1)^k }{m^2-4k^2} c_k \right) & \text{if }m \text{ is odd} \end{cases}  \nonumber \\
&b_m = \begin{cases} \frac{1}{2}d_{m/2} & \text{if }m \text{ is even} \\ 
\delta_m+ \frac{4}{\pi} \sum_{k=1}^T \frac{k(-1)^k \sin(\pi m/2)}{m^2-4k^2} d_k & \text{if }m \text{ is odd} \end{cases} \nonumber \\
\textsc{Minimize: } &\Omega  \nonumber \\
\textsc{Subject to: }  &0 \leq w_j ,v_j \leq \Omega \leq 1; \quad 1 \leq j \leq N,\label{fullbound} \\
   L\sum_{j=1}^{N} &(w_j+v_j) = 1, \label{fullmass} \\
   L^2 \sum_{j=1}^{N}  &(jw_j-(j-1)v_j) \geq h_1 \label{fullav}\\
    L^3 \sum_{j=1}^{N} & (j-1)^2 (w_j +v_j) \leq 2/3+h_2^2/2, \label{fullmom} \\
     \frac{L}{2}\sum_{j=1}^{N} \alpha_{j,m}^-(w_j+v_j) & \leq  \frac{4 \sin \left(m\pi /2 \right)}{m \pi}  a_m - 2(a_m^2+b_m^2); \quad 1 \leq m \leq 2R,\label{fullcos} \\
   \frac{L}{2}\sum_{j=1}^N (\beta_{j,m}^-w_j-\beta_{j,m}^+v_j) &\leq -\frac{8}{m \pi}\sin\left( m\pi /2 \right) b_m ; \quad 1 \leq m \leq 2R, \label{fullsinep} \\
     \frac{L}{2}\sum_{j=1}^N (\beta_{j,m}^+w_j-\beta_{j,m}^-v_j) & \geq -\frac{8}{m \pi}\sin\left( m\pi /2 \right) b_m; \quad 1 \leq m \leq 2R,\label{fullsinem}\\
 | \epsilon_{2m-1} | & \leq \frac{1}{4-m^2/T^2} \cdot \frac{2m}{\pi \sqrt{6T^3}} \quad 1 \leq m \leq R, \label{fullep}\\
  | \delta_{2m-1} | & \leq \frac{1}{4-m^2/T^2}\cdot \frac{4}{\pi\sqrt{2T}} \quad 1 \leq m \leq R,\label{fulldel}\\
  |c_k |, & \ |d_k|  \leq \frac{2}{\pi} \quad 1 \leq k \leq T, \label{fullckdk}\\
&  \sum_{k=1}^T  (c_k^2+d_k^2) \leq 1/2 ,\label{fullpars} \\
   & p_1 \leq c_1 \leq p_2, \quad q_1 \leq d_1 \leq q_2,\label{fullc1d1} \\
      \frac{L}{2}&\sum_{j=1}^{N} \alpha_{j,2}^+(w_j+v_j) \geq - \frac{1}{2}(p_2^2+\max\{q_1^2,q_2^2\}).\label{fullA1} 
\end{align}


Our next proposition shows how the input and output of the convex program relate to an $f(x),M(x)$ pair of functions.

\begin{prop}\label{convexprop} Let $N,T,R$ be arbitrary positive integers, and $h_1,h_2,p_1,p_2,q_1,q_2$ be real numbers. Let $\Omega^*$ be the optimum of the above program with this choice of input. Suppose that $f(x),M(x)$ is as in \eqref{fgm} and satisfies 
\begin{enumerate}[(i)]
\item $0 \leq h_1\leq E(M) \leq h_2$,
\item $0 \leq p_1 \leq \int_{-1}^1 \cos(\pi x) f(x) \ dx \leq p_2$,
\item $q_1 \leq \int_{-1}^1 \sin(\pi x) f(x) \ dx \leq q_2$.
\end{enumerate}
Then $\|M\|_{\infty} \geq \Omega^*$.
\end{prop}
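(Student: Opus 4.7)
The plan is to show that from any admissible pair $(f, M)$ satisfying hypotheses (i)--(iii), one can construct a feasible point of the convex program whose objective value $\Omega$ equals $\|M\|_\infty$; this forces $\Omega^* \leq \|M\|_\infty$, proving the claim.

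Given such a pair, I would set $\Omega := \|M\|_\infty$, take $w_j, v_j$ to be the interval averages of $M$ defined at the start of Section~\ref{setup}, take $c_k, d_k$ to be the sine-cosine Fourier coefficients of $f$ on $[-1,1]$ appearing in \eqref{trigf1}, and for $1 \leq m \leq R$ define
\[
\epsilon_{2m-1} := \frac{2(2m-1)\sin(\pi(2m-1)/2)}{\pi} \sum_{k=T+1}^\infty \frac{(-1)^k}{(2m-1)^2-4k^2}\, c_k,
\]
with $\delta_{2m-1}$ defined by the analogous formula using $d_k$ in place of $c_k$. Recalling $c_0 = 1/2$, so that the $k=0$ term contributes exactly $1/(2m^2)$, the program's ``variable expressions'' for $a_m$ and $b_m$ then coincide with the true sine-cosine Fourier coefficients of $f$ on $[-2,2]$, by identities \eqref{am}--\eqref{bm} of Lemma~\ref{fourel}.

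The bulk of the argument is a systematic verification that this assignment satisfies every constraint, each reducing to a result from Section~\ref{setup}. The bounds \eqref{fullbound} are immediate from $0 \leq M \leq \|M\|_\infty \leq 1$, \eqref{fullmass} is equation \eqref{mmass}, and \eqref{fullav}, \eqref{fullmom} are Lemma~\ref{momboundslem} applied with the given $h_1, h_2$; the tail estimates \eqref{fullep}, \eqref{fulldel} are exactly Lemma~\ref{tailbounds}; \eqref{fullc1d1} is hypotheses (ii)--(iii); and \eqref{fullpars} follows from Parseval's identity \eqref{parseval} together with $\int f^2 \leq \int f = 1$, valid since $0 \leq f \leq 1$. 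Constraint \eqref{fullckdk} follows from a brief rearrangement argument: among $f \colon [-1,1]\to[0,1]$ with $\int f = 1$, the value $|\int f\cos(\pi k x)\,dx|$ is maximized by the indicator of $\{\cos(\pi k x)\geq 0\}$ and equals $2/\pi$, with the same bound for $|d_k|$. The three trigonometric families \eqref{fullcos}, \eqref{fullsinep}, \eqref{fullsinem} combine the interval bounds on $A_m$ and $B_m$ from Lemma~\ref{trigintervallem} with the closed-form identities \eqref{Am}, \eqref{Bm} of Lemma~\ref{fourel}, after substituting the latter into the right-hand side of the former. Finally, \eqref{fullA1} pairs the upper bound on $A_2$ from Lemma~\ref{trigintervallem} with the identity $A_2 = -\tfrac{1}{2}(c_1^2+d_1^2)$ (the linear term in \eqref{Am} vanishes because $\sin(\pi) = 0$): the hypothesis bounds $0 \leq c_1 \leq p_2$ and $q_1 \leq d_1 \leq q_2$ give $c_1^2 + d_1^2 \leq p_2^2 + \max\{q_1^2, q_2^2\}$, so $A_2 \geq -\tfrac{1}{2}(p_2^2 + \max\{q_1^2, q_2^2\})$ and chaining these inequalities delivers the constraint.

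The one place where genuine care is needed is the bookkeeping that identifies the program's variable expression for $a_m$ in the odd case with the true Fourier coefficient of $f$ on $[-2,2]$: after extracting $c_0 = 1/2$ as the $1/(2m^2)$ summand and absorbing the $k > T$ tail into $\epsilon_m$, the result must reproduce formula \eqref{am} exactly, and similarly for $b_m$ and $\delta_m$. Once this identification is in hand, every remaining constraint is a one-line substitution from a lemma of Section~\ref{setup}.
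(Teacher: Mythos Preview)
Your proposal is correct and follows essentially the same approach as the paper: construct a feasible point from $(f,M)$ with $\Omega=\|M\|_\infty$, taking $w_j,v_j$ as interval averages, $c_k,d_k$ as Fourier coefficients, and $\epsilon_{2m-1},\delta_{2m-1}$ as the series tails, then verify each constraint by invoking the matching lemma from Section~\ref{setup}. Your treatment is in fact slightly more detailed than the paper's in two places---the rearrangement argument for the $2/\pi$ bound in \eqref{fullckdk} and the explicit bookkeeping identifying the program's odd-$m$ expression for $a_m$ with \eqref{am}---though your description of $\delta_{2m-1}$ as ``analogous'' glosses over the fact that the $b_m$ tail carries an extra factor of $k$ and a different leading constant than the $a_m$ tail.
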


\begin{proof} Our strategy is to show that any $f(x),M(x)$ satisfying the above hypotheses gives a feasible assignment of variables such that $\Omega \leq \| M \|_{\infty}$. Let $f(x),M(x)$ be as in \eqref{fgm} such that (i),(ii),(iii) above are also satisfied. We will show that the following choice of variables is feasible. Set the variable $\Omega = \|M\|_{\infty}$. For each $1 \leq j \leq N$ set 
\[ w_j = \frac{1}{L} \int_{(j-1)L}^{jL} M(x) \ dx\quad \text{and} \quad v_j = \frac{1}{L} \int_{-jL}^{-(j-1)L} M(x) \ dx.\]
For all $k \geq 1$ set
\begin{equation}\label{ckdkdef} c_k = \int_{-1}^1 \cos(\pi kx ) f(x) \ dx \quad \text{and} \quad d_k =  \int_{-1}^1 \sin(\pi k x) f(x) \ dx.\end{equation}
The variables of the program $c_k,d_k$ only run from the indices $1 \leq k \leq T$, but we define $c_k,d_k$ for $k \geq T+1$ to aid in our definition of the last two types of variable. For each $1 \leq m \leq R$ set 
\[ \epsilon_{2m-1} = \frac{2m (-1)^{m+1}}{\pi} \sum_{k=T+1}^\infty \frac{(-1)^k}{(2m-1)^2-4k^2} c_k, \quad \text{and} \]
\[ \delta_{2m-1} = \frac{4(-1)^{m+1}}{\pi} \sum_{k=T+1}^\infty \frac{k(-1)^k }{(2m-1)^2-4k^2} d_k.\]

We'll now show that this assignment of variables is feasible. Constraint \eqref{fullbound} follows immediately from the definitions; the average of $M(x)$ on an interval cannot exceed the maximum. The other constraints are satisfied for reasons contained in the lemmas of Section~\ref{setup}, or hypotheses (i),(ii),(iii). The dependency of constraints on corresponding lemma(s) is outlined in Table~\ref{contable}. 

\begin{itemize}
\item Constraint \eqref{fullmass} follows from \eqref{mmass}, i.e. because $\int_{-1}^1 M(x) \ dx = 1$. 
\item Constraints \eqref{fullav} and \eqref{fullmom} follow from Lemma~\ref{momboundslem} and hypothesis (i). The constraints represent bounds on the mean and second moment of $M(x)$, respectively.
\item By Lemma~\ref{fourel}, the righthand side of constraint \eqref{fullcos} is the $m^{th}$ cosine Fourier coefficient of $M(x)$. By Lemma~\ref{trigintervallem}, the lefthand side of \eqref{fullcos} is a lower bound on this Fourier coefficient.
\item By Lemma~\ref{fourel}, the righthand side of constraints \eqref{fullsinep} and \eqref{fullsinem} are the $m^{th}$ sine Fourier coefficient of $M(x)$. By Lemma~\ref{trigintervallem}, the righthand sides of \eqref{fullsinep} and \eqref{fullsinem} are lower and upper bounds, respectively, on this Fourier coefficient.
\item Constraints \eqref{fullep} and \eqref{fulldel} follow from Lemma~\ref{tailbounds}.
\item Constraint \eqref{fullckdk} follows from \eqref{ckdkdef} since $f(x) \in [0,1]$ and $\|f\|_1 = 1$. 
\item Constraint \eqref{fullpars} follows from Parseval's identity, in particular equation \eqref{parseval}.
\item The left constraint of \eqref{fullc1d1} follows from hypothesis (ii), since the integer in (ii) is precisely $c_1$. Similarly, the right constraint of \eqref{fullc1d1} follows from hypothesis (iii).

\item By Lemma~\ref{fourel}, in particular \eqref{am} and \eqref{Am}, the second cosine Fourier coefficient of $M(x)$ is 
\[ A_2  = \frac{1}{2} \int_{-2}^2 \cos(\pi x) \ dx = -\frac{1}{2}(c_1^2+d_1^2).\]
From constraint \eqref{fullc1d1} we see that $-\frac{1}{2}(p_2^2+\max\{q_1^2,q_2^2\})$ is a lower bound on the righthand side above. Also, from Lemma~\ref{trigintervallem} we see that the lefthand side of constraint \eqref{fullA1} is an upper bound on the lefthand side above. 
\end{itemize}

\end{proof}

\renewcommand{\arraystretch}{1.3}

\begin{table}[h!] \centering
 \begin{tabular}{| c |c |} 
 \hline
          Constraint & Follows from \\ 
 \hline
 \hline
 \eqref{fullmass} & \eqref{mmass} \\ 
 \hline
 \eqref{fullav}, \eqref{fullmom}  & Lemma~\ref{momboundslem} and (i) \\ 
 \hline
   \eqref{fullcos} & Lemma~\ref{fourel}: \eqref{Am} and Lemma~\ref{trigintervallem} \\ 
   \hline
    \eqref{fullsinep}, \eqref{fullsinem} & Lemma~\ref{fourel}: \eqref{Bm} and Lemma~\ref{trigintervallem} \\ 
 \hline
  \eqref{fullep}, \eqref{fulldel}& Lemma~\ref{tailbounds} \\ 
   \hline
     \eqref{fullckdk}& \eqref{ckdkdef} \\ 
   \hline
   \eqref{fullpars} & \eqref{parseval} \\ 
 \hline
   \eqref{fullc1d1} & (ii) and (iii) \\ 
 \hline
    \eqref{fullA1} & Lemma~\ref{fourel}: \eqref{Am}, Lemma~\ref{trigintervallem}, (ii) and (iii) \\ 
 \hline
\end{tabular}
\caption{Constraint feasibility}
\label{contable}
\end{table}

For the remainder of this paper, let $f^\ast,M^\ast$ be an optimal pair in the sense that $\|M^\ast\|_{\infty}=~\mu$. Also denote the first sine and cosine Fourier coefficients of $f^\ast$ by $c_1^\ast$ and $d_1^\ast$, i.e.  
\[ c_1^\ast = \int_{-1}^1 \cos(\pi x) f^\ast (x) \ dx , \quad \text{\and} \quad d_1^\ast = \int_{-1}^1 \sin(\pi x) f^\ast (x) \ dx .\]
Note that $f^\ast(x)$, $f^\ast(-x)$, and $1-f^\ast(x)$ are all also optimal. This means that without loss of generality, we can assume that $E(M^\ast) \geq 0$ and $c_1^\ast \geq 0$. It's also easy to see that $E(M^*) \leq 2$ and $|c_1^*|,|d_1^*| \leq 1$. If we choose the inputs 
\begin{equation}\label{valinter} (h_1,h_2) = (0,2), \quad (p_1,p_2) = (0,1), \quad (q_1,q_2) = (-1,1), \end{equation}
then the optimum of our convex program with this input gives a lower bound on $\mu$. However, the optimum ends up close to 0.25 for any choice of $N,T,R$. In order to use our program to give a good bound on $\mu$, we will need a `divide and conquer' strategy of breaking up the valid ranges of parameters shown in \eqref{valinter} into small chunks. The minimum of all the optimums over these smaller intervals will be our lower bound on $\mu$.

\subsection{Dual program output}

As was the case with our simpler linear program in Section~\ref{simpsec}, it is best to choose the parameter $N$ to be fairly large, i.e. at least 2000. This rules out the use of exact solver algorithms, but numerical solvers are still fast enough for this purpose. To guarantee the correctness of our output, we will find feasible points in the interior of the dual program space, sufficiently far from the boundary to compensate for any floating-point arithmetic errors. The dual of a convex quadratically constrained problem is easiest to calculate by first reformulating the primal program as a second order cone program (SOCP). The dual of a SOCP is relatively simple to write down. We have placed the details of these steps in two appendices. In Appendix II we explicitly write down the dual program and discuss our post-processing verification step to ensure that even if worst-case floating point arithmetic errors occur, the assignments of dual variables used are definitely feasible. In Table~\ref{dprogout} we display the value of the objective function of the dual program for a verified feasible point in the dual space for several choices of input. Note that we have used the `divide and conquer' strategy mentioned above to narrow down the location of $E(M^\ast),c_1^\ast$ and $d_1^\ast$.

\begin{table}[h!] \centering
\begin{scriptsize}

 \begin{tabular}{| c |c |} 
 \hline
          \makecell{Parameter assignment \\ $N,T ,R= 10000,4000,10$ }  & Optimum lower bound \\
 \hline
 \hline
 $(h_1,h_2) ,(p_1,p_2),(q_1,q_2 )=(0.75,2),(0,1),(-1,1) $ & 0.38\\
 \hline
 $(h_1,h_2) ,(p_1,p_2),(q_1,q_2 )=(0.4,0.75),(0,1),(-1,1) $ & 0.38\\
 \hline
 $(h_1,h_2) ,(p_1,p_2),(q_1,q_2 )=(0.2,0.4),(0,1),(-1,1) $ & 0.38\\
 \hline
 $(h_1,h_2) ,(p_1,p_2),(q_1,q_2 )=(0.1,0.2),(0,1),(-1,1) $ & 0.38\\
 \hline
 $(h_1,h_2) ,(p_1,p_2),(q_1,q_2 )=(0.08,0.1),(0,1),(-1,1) $ & 0.38\\
 \hline
 $(h_1,h_2) ,(p_1,p_2),(q_1,q_2 )=(0,0.08),(0,1),(-1,-0.05) $ & 0.38\\
 \hline
 $(h_1,h_2) ,(p_1,p_2),(q_1,q_2 )=(0,0.08),(0,1),(-0.05,-0.025) $ & 0.38\\
 \hline
  $(h_1,h_2) ,(p_1,p_2),(q_1,q_2 )=(0,0.08),(0,1),(0.05,1) $ & 0.38\\
 \hline
 $(h_1,h_2) ,(p_1,p_2),(q_1,q_2 )=(0,0.08),(0,1),(0.025,0.05) $ & 0.38\\
 \hline
 $(h_1,h_2) ,(p_1,p_2),(q_1,q_2 )=(0,0.08),(0,0.25),(-0.025,0.025) $ & 0.38\\
\hline
 $(h_1,h_2) ,(p_1,p_2),(q_1,q_2 )=(0,0.08),(0.25,0.3),(-0.025,0.025) $ & 0.38\\
\hline
$(h_1,h_2) ,(p_1,p_2),(q_1,q_2 )=(0,0.08),(0.3,0.33),(-0.025,0.025) $ & 0.38\\
\hline
$(h_1,h_2) ,(p_1,p_2),(q_1,q_2 )=(0,0.08),(0.5,1),(-0.025,0.025) $ & 0.38\\
\hline
$(h_1,h_2) ,(p_1,p_2),(q_1,q_2 )=(0,0.08),(0.45,0.5),(-0.025,0.025) $ & 0.38\\
\hline
$(h_1,h_2) ,(p_1,p_2),(q_1,q_2 )=(0.06,0.08),(0.33,0.45),(-0.025,0.025) $ & 0.38\\
\hline
$N,T,R = 20000,5000,10$ & \\
\hline
\hline
$(h_1,h_2) ,(p_1,p_2),(q_1,q_2 )=(0.0,0.06),(0.33,0.45),(-0.025,-0.02) $ &0.38 \\
\hline
$(h_1,h_2) ,(p_1,p_2),(q_1,q_2 )=(0.0,0.06),(0.33,0.45),(0.02,0.025) $ &0.38 \\
\hline
$(h_1,h_2) ,(p_1,p_2),(q_1,q_2 )=(0.0,0.06),(0.33,0.35),(-0.02,0.02) $ &0.37925 \\
\hline

\end{tabular}
\end{scriptsize}

\caption{Dual program output I}
\label{dprogout}
\end{table}

The data of the first line of Table~\ref{dprogout} shows that either $\mu \geq 0.38$ or $E(M^\ast) \leq 0.75$. Combining all data from Table~\ref{dprogout} shows that either $\mu \geq 0.37925$ or 
\begin{equation}\label{lastregion} 0 \leq E(M^\ast) \leq 0.06, \quad 0.35 \leq c_1^\ast \leq 0.45, \quad \text{and } -0.02 \leq d_1^\ast \leq 0.02.\end{equation}

We can proceed in the same way by dividing the above intervals into small pieces and finding feasible points for each of the corresponding inputs. The drawback to this approach is that in order to achieve a lower bound close to 0.379, many hundreds of feasible points need to be computed and verified. Instead, we can use the observation that the parameters $h_1,h_2,p_1,p_2$ are not a part of the constraints in the dual program, i.e. they only affect the objective function. As a result, for any choice of input $N,T,R,h_1,h_2,p_1,p_2,q_1,q_2$ and a feasible assignment of dual variables, we have a lower bound on the optimum of the dual program for any choice of $h_1,h_2,p_1,p_2$ by reusing the variable assignment, and simply re-computing the objective function with the changed values of $h_1,h_2,p_1,p_2$. A precise statement, proof, and example of this idea is given in Appendix II. We will apply this technique by first calculating a feasible point in the dual with input such that $h = h_1=h_2$ and $p = p_1 = p_2$. Next we determine by how much can $(h,p)$ change while keeping the rest of the feasible point data fixed so that the objective function is always at least 0.379005. Since the dependence of the objective function on $h$ and $p$ is quadratic, the region for which the objective is at least 0.379005 will be an ellipse on $(h,p)$-axes. We compute seven feasible points such that the union of the ellipses described above covers the remaining region described in \eqref{lastregion}. The input of these feasible points is described in Table~\ref{dprogout2}. The covering ellipses are shown in Figure~\ref{ellipses}.



\begin{table}[h!] \centering
\begin{footnotesize}

 \begin{tabular}{|c| c | c| } 
 \hline
       Label/Colour in Figure~\ref{ellipses} &  Initial parameter assignment & Initial objective value  \\
 \hline
 1/Green & $h = 0.015$, $p = 0.381$ & 0.37905 \\
 \hline
  2/Blue &   $h= 0.015$, $p = 0.385$ & 0.37905 \\
 \hline
  3/Red & $h= 0.02$, $p = 0.375$ & 0.37905 \\
 \hline
  4/Purple &  $h = 0.004$, $p = 0.3875$ & 0.37905 \\ 
 \hline
  5/Light green & $h = 0$, $p = 0.4$ & 0.3791 \\
 \hline
  6/Orange & $h = 0$, $p = 0.381$  & 0.3791\\
 \hline
  7/Light blue & $h = 0.03$, $p = 0.375$  & 0.3794 \\
 \hline

\end{tabular}
\end{footnotesize}

\caption{Dual program output II \\ $N,T ,R= 25000,7000,10$, and $(q_1,q_2) = (-0.02,0.02)$}
\label{dprogout2}
\end{table}

\begin{figure}[h!]
  \centering
    \includegraphics[width=0.5\textwidth]{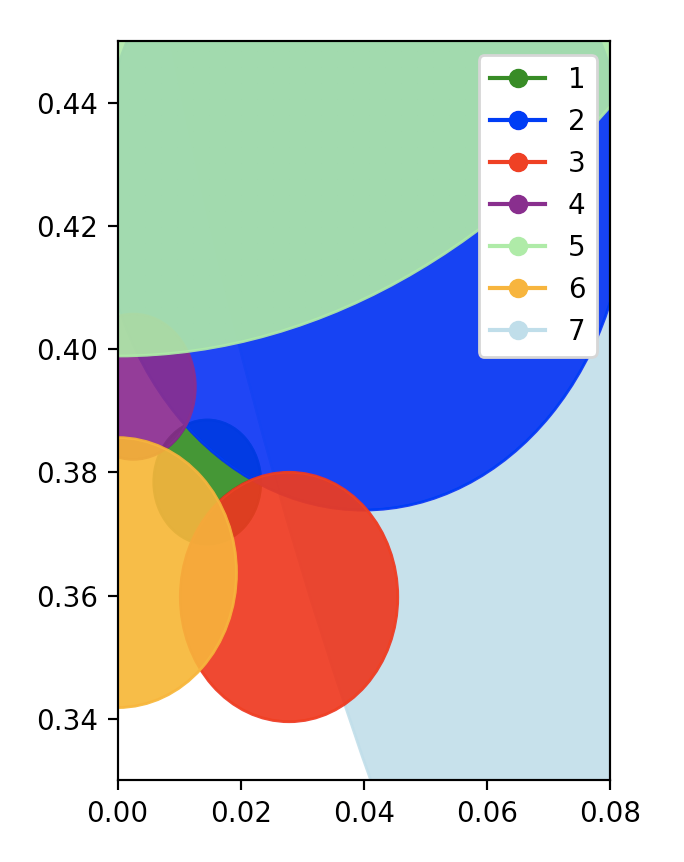}
    \caption{Lower bound of 0.379005  }
    \label{ellipses}
\end{figure}

The full variable assignments of all feasible solutions used in this section are available upon request to the author. We conclude that the Theorem~\ref{T1} bound $\mu \geq 0.379005$ is now verified.

\section{Concluding remarks} 


We expect that by computing more feasible solutions, using larger values of $N,T,R$, and using more accurate floating point calculations, that our lower bound of 0.379005 can be improved. Using the input $N,T,R = 25000,7000,10$ and 
\[ h_1 = h_2 = 0.015, \quad p_1 = p_2 = 0.381, \quad -q_1 = q_2 = 0.02,\]
we were able to find a feasible point with objective 0.37905, but not with objective 0.3791. This seems to indicate that the limit of this method is not much larger than 0.379. We also expect that the optimal function $f^\ast(x)$ is even. A proof of this would also improve the lower bound on $\mu$, since we could always take $h_1 = h_2 = q_1 = q_2 = 0$ as part of the input in our convex program. 


\vspace{1cm}
\textbf{Acknowledgements.} The author thanks J\'ozsef Solymosi and Josh Zahl for careful reading of this work and providing valuable suggestions.

\clearpage

\section{Appendix I: Primal program as a SOCP}

In order to construct the dual program of the convex program in Section~\ref{fullconsec} we first reformulate this primal program as a second order cone program (SOCP). In Appendix II, we reuse the data involved in the primal program to construct the dual SOCP. The input of both the primal and the dual is the same as the input of the convex program in Section~\ref{fullconsec}. The constraints in the primal SOCP are the exact same as the constraints in Section~\ref{fullconsec}. The only difference is notation of variables and that the quadratic constraints must be rearranged to become second order cone constraints. Throughout, it will be convenient to use square brackets to describe individual elements of vectors and arrays. For example $A[i,j] = A_{ij}$ denotes the $i,j^{th}$ entry. 
\vspace{0.15cm}

\noindent\textsc{Input: }  $N,L=2/N,T,R, h_1,h_2,p_1,p_2,q_1,q_2$ 
\vspace{0.15cm}

\noindent\textsc{Variables: } A $(2N+2T+2R+1) \times 1$ vector variable $X$. The entries of $X$ correspond to the variables in the original convex program. This correspondence is described in the following table of variable expressions. The entry $X_0$ corresponds to variable $\Omega$ in the original program. 

\vspace{0.05cm} 

\noindent\textsc{Variable expressions and data: } Variable expressions and data in the primal and dual (in Appendix II) will be arranged in arrays and vectors. See Table~\ref{socpex} for the expressions and Table~\ref{socpdata} for the data.  In the ``Description'' column we will indicate how the array or vector is indexed. We use the notation $[a,b]$ to indicate the set of integers between $a$ and $b$ inclusively, and $[1,a] = [a]$. For example, an $[a] \times [2,b]$ array $A$ will have entries $A[i,j]$ for $1 \leq i \leq a$ and $2 \leq j \leq b$. 

\vspace{0.15cm}

\noindent\textsc{Objective: minimize} $ \Phi^TX = X_0$ 

\vspace{0.15cm}

\noindent\textsc{Constraints: }  
\begin{scriptsize}
\begin{align*}
\big\| \text{A\textsubscript{coscone}}[m]X + \text{b\textsubscript{coscone}}[m] \big\|_2 &\leq \text{c$^T$\textsubscript{coscone}}[m]X + \text{d\textsubscript{coscone}}[m], \text{ for all } 1 \leq m \leq 2R \\
\big\| \text{A\textsubscript{par}}X  \big\|_2 &\leq  \text{d\textsubscript{par}},  \\
 \text{c$^T$\textsubscript{obnd}}X+\text{d\textsubscript{obnd}} &\geq 0, \\
 \text{c$^T$\textsubscript{wbnd}}[i,j]X &\geq 0,  \text{ for all } 1 \leq i \leq 2, \  1 \leq j \leq N,\\
  \text{c$^T$\textsubscript{vbnd}}[i,j]X &\geq 0,  \text{ for all } 1 \leq i \leq 2, \  1 \leq j \leq N,\\
 \text{c$^T$\textsubscript{sum}}[i]X   + \text{d\textsubscript{sum}}[i]&\geq 0,  \text{ for all } 1 \leq i \leq 2, \\
\text{c$^T$\textsubscript{mean}}X   + \text{d\textsubscript{mean}}&\geq 0,  \\
\text{c$^T$\textsubscript{mome}}X   + \text{d\textsubscript{mome}}&\geq 0,  \\
 \text{c$^T$\textsubscript{sin-lower}}[m]X &\geq 0,  \text{ for all } 1 \leq m \leq 2R \\
\text{c$^T$\textsubscript{sin-upper}}[m]X&\geq 0,    \text{ for all } 1 \leq m \leq 2R \\
\text{c$^T$\textsubscript{ckbnd}}[i,k]X   + \text{d\textsubscript{ckbnd}}[i,k] &\geq 0, \text{ for all } 1 \leq i \leq 2, \ 1 \leq k \leq T, \\
\text{c$^T$\textsubscript{dkbnd}}[i,k]X   + \text{d\textsubscript{dkbnd}}[i,k] &\geq 0, \text{ for all } 1 \leq i \leq 2, \ 1 \leq k \leq T, \\
\text{c$^T$\textsubscript{c1bnd}}[i]X   + \text{d\textsubscript{c1bnd}}[i] &\geq 0, \text{ for all } 1 \leq i \leq 2, \\
\text{c$^T$\textsubscript{d1bnd}}[i]X   + \text{d\textsubscript{d1bnd}}[i] &\geq 0, \text{ for all } 1 \leq i \leq 2, \\
\text{c$^T$\textsubscript{ep}}[i,m]X   + \text{d\textsubscript{ep}}[i,m] &\geq 0, \text{ for all } 1 \leq i \leq 2, \ 1 \leq m \leq R,  \\
\text{c$^T$\textsubscript{del}}[i,m]X   + \text{d\textsubscript{del}}[i,m] &\geq 0, \text{ for all } 1 \leq i \leq 2, \ 1 \leq m \leq R, \\
\text{c$^T$\textsubscript{cosup}}X   + \text{d\textsubscript{cosup}} &\geq 0.  \end{align*}

\end{scriptsize}
The set of constraints above is precisely the same as our program in Section~\ref{fullconsec}. For example, if both sides of the first constraint above are squared and simplified, we will obtain the constraint from the original convex program \eqref{fullcos}.

\renewcommand{\arraystretch}{1.5}

\begin{table}[!h] \centering
 \begin{tabular}{| c |c |c|} 
 \hline
          Name(s) & Description & Defining property \\ 
 \hline
 \hline

    $w,v$ &  $[N]$-expression vector  & $w_j = X_j$, $ v_j=X_{N+j} $ for $1 \leq j \leq N$ \\ 
   \hline
    $c,d$ &  $[T]$-expression vector  & $c_k = X_{2N+k}$, $ d_k=X_{2N+T+k} $ for $1 \leq k \leq T$ \\ 
   \hline
    $\epsilon,\delta$ &  $[R]$-expression vector& $\epsilon_m = X_{2N+2T+m}$, $ \delta_m=X_{2N+2T+R+m} $ for $1 \leq m \leq R$ \\ 
   \hline
 
  $a_m^*$ & $[2R]$-expression vector& $a_m^*=\begin{cases} \frac{1}{2}c_{m/2} & \text{if }m \text{ is even} \\ 
\epsilon_m+\frac{2m\sin(\pi m/2)}{\pi}  \sum_{k=1}^T \frac{(-1)^k }{m^2-4k^2} c_k & \text{if }m \text{ is odd} \end{cases} $ \\ 
    \hline
      $b_m$ &$[2R]$-expression vector & $ b_m=\begin{cases} \frac{1}{2}d_{m/2} & \text{if }m \text{ is even} \\ 
\delta_m+ \frac{4}{\pi} \sum_{k=1}^T \frac{k(-1)^k \sin(\pi m/2)}{m^2-4k^2} d_k & \text{if }m \text{ is odd} \end{cases} $ \\ 
    \hline
  Fcos\textsubscript{LB} & $[2R]$-expression vector &Fcos\textsubscript{LB}$[m]= \frac{L}{2}\sum_{j=1}^{N} \alpha_{j,m}^-(w_j+v_j)$ \\ 
 \hline
   Fcos\textsubscript{UB} & $[2R]$-expression vector & Fcos\textsubscript{UB}$[m]= \frac{L}{2}\sum_{j=1}^{N} \alpha_{j,m}^+(w_j+v_j)$ \\ 
 \hline
   Fsin\textsubscript{LB} & $[2R]$-expression vector &Fsin\textsubscript{UB}$[m]= \frac{L}{2}\sum_{j=1}^N (\beta_{j,m}^-w_j-\beta_{j,m}^+v_j)$ \\ 
 \hline
   Fsin\textsubscript{UB} & $[2R]$-expression vector &Fsin\textsubscript{UB}$[m]=  \frac{L}{2}\sum_{j=1}^N (\beta_{j,m}^+w_j-\beta_{j,m}^-v_j)$ \\ 
 \hline

\end{tabular}
\caption{Variable expressions for SOCP}
\label{socpex}
\end{table}

Remark: we will continue to use the values assigned to $\alpha_{j,m}$, $\beta_{j,m}$ in Section~\ref{fullconsec}.

\renewcommand{\arraystretch}{1.5}

\begin{table}[!h] \centering
\scriptsize
 \begin{tabular}{| c |c |c|}

 \hline
          Name& Description & Defining property \\ 
 \hline
 \hline

        $E$ & Integer & $E = 2N+2T+2R$ \\ 
   \hline
          $\Phi$ & $[0,E]$-vector & $\Phi_0 = 1$, $\Phi_j = 0$ if $1 \leq j \leq E$ \\ 
   \hline
\makecell{A\textsubscript{coscone}}  & $[2R] \times [3] \times [0,E]$ array & \makecell{A\textsubscript{coscone}$[m,1] X = \frac{\sin(m\pi /2)a^*_m}{m\pi} -$Fcos\textsubscript{LB}$[m] /2$\\ A\textsubscript{coscone}$[m,2] X =a^*_m$  \\ A\textsubscript{coscone}$[m,3] X =b_m $} \\ 
 \hline
  b\textsubscript{coscone} & $[2R] \times [3]$ array &  \makecell{ b\textsubscript{coscone}$[m,1] =-1/2+ 2\sin^2(\pi m/2)/(m\pi)^2 $\\ b\textsubscript{coscone} $[m,2] =\sin(\pi m/2)/(m\pi) $ \\ b\textsubscript{coscone} $[m,3] =0$ }\\ 
 \hline
   c\textsubscript{coscone} & $[2R] \times [0,E]$ array &  c$^T$\textsubscript{coscone}$[m]X =\frac{\sin(m\pi /2)a^*_m}{m\pi} -$Fcos\textsubscript{LB}$[m] /2$  \\ 
 \hline
   d\textsubscript{coscone} & $[2R]$-vector & d\textsubscript{coscone} $ [m] = 1/2+ 2\sin^2(\pi m/2)/(m\pi)^2$ \\ 
 \hline
 A\textsubscript{par} & $[2T]  \times [0,E]$ array & \makecell{A\textsubscript{par}$[k]X = c_k$, and \\ A\textsubscript{par}$[T+k]X= d_k$ for all $1 \leq k \leq T$} \\ 
 \hline
  d\textsubscript{par} & Real &d\textsubscript{par} $= 1/\sqrt{2}$ \\ 
 \hline
  \hline
     c\textsubscript{obnd}& $[0,E]$-vector &  c$^T$\textsubscript{obnd}$X= -\Omega$  \\ 
 \hline
    d\textsubscript{obnd}& Real &  d\textsubscript{obnd}$=1$\\
 \hline
   c\textsubscript{wbnd}& $[2] \times [N] \times [0,E]$ array &   \makecell{  c$^T$\textsubscript{wbnd}$[1,j]X = \Omega-w_j$, for all $1 \leq j \leq N$, and \\  c\textsubscript{wbnd}$[2,j]X = w_j$} \\ 
 \hline
    c\textsubscript{vbnd}& $[2] \times [N] \times [0,E]$ array &   \makecell{  c$^T$\textsubscript{vbnd}$[1,j]X = \Omega-v_j$, for all $1 \leq j \leq N$, and \\  c\textsubscript{vbnd}$[2,j]X = v_j$} \\ 
 \hline
   c\textsubscript{sum}& $[2]\times [0,E]$ array  &  c$^T$\textsubscript{sum}$[i]X = (-1)^{i+1} \sum_{j=1}^N (X_j + X_{N+j})$, for $i = 1,2$\\ 
 \hline
   d\textsubscript{sum}& [2]-vector  & d\textsubscript{sum}$[i] = (-1)^iN/2$, for $i=1,2$ \\ 
 \hline
    c\textsubscript{mean}& $[0,E]$-vector  &  c$^T$\textsubscript{mean}$X = L\sum_{j=1}^N(jX_j - (j-1)X_{j-1})$ \\ 
 \hline
   d\textsubscript{mean}& Real  & $-h_1N/2$ \\ 
 \hline
     c\textsubscript{mome}& $[0,E]$-vector  &  c$^T$\textsubscript{mome}$X = -L^2\sum_{j=1}^N (j-1)^2(X_j+X_{N+j})$  \\ 
 \hline
   d\textsubscript{mome}& Real  & $(2/3 + h_2^2/2)N/2$ \\ 
 \hline
      c\textsubscript{sin-lower}& $ [2R] \times [0,E]$ array  & c$^T$\textsubscript{sin-lower}$X =-\frac{8}{m \pi}\sin\left( m\pi /2 \right) b_m -$Fsin\textsubscript{LB}$[m]$   \\ 
 \hline
   c\textsubscript{sin-upper}& $ [2R] \times [0,E]$ array   & c$^T$\textsubscript{sin-upper}$X =\frac{8}{m \pi}\sin\left( m\pi /2 \right) b_m+$Fsin\textsubscript{LB}$[m]$ \\ 
 \hline
      c\textsubscript{ckbnd}& $[2] \times [T] \times [0,E]$ array  &  c$^T$\textsubscript{ckbnd}$[i,k]X = (-1)^{i+1} c_k$ for $i=1,2$ and $1 \leq k \leq T$ \\ 
 \hline
   d\textsubscript{ckbnd}& $[2] \times [T] $ array  & d\textsubscript{ckbnd}$[i,k] = 2/\pi$ for $i = 1,2$ and $1 \leq k \leq T$ \\ 
 \hline
       c\textsubscript{dkbnd}& $[2] \times [T] \times [0,E]$ array  &  c$^T$\textsubscript{dkbnd}$[i,k]X = (-1)^{i+1} d_k$  for $i=1,2$ and $1 \leq k \leq T$ \\ 
 \hline
   d\textsubscript{dkbnd}& $[2] \times [T] $ array  &  d\textsubscript{dkbnd}$[i,k] = 2/\pi$ for $i = 1,2$ and $1 \leq k \leq T$ \\ 
 \hline
       c\textsubscript{c1bnd}& $[2] \times [0,E]$ array  &  c$^T$\textsubscript{c1bnd}$[i]X = (-1)^{i+1}c_1$ for $i = 1,2$ \\ 
 \hline
   d\textsubscript{c1bnd}& [2]-vector& d\textsubscript{ckbnd}$ = [-p_1,p_2]$ \\ 
 \hline
       c\textsubscript{d1bnd}& $[2] \times [0,E]$ array  &  c$^T$\textsubscript{d1bnd}$[i]X = (-1)^{i+1}d_1$ for $i = 1,2$ \\ 
 \hline
   d\textsubscript{d1bnd}& [2]-vector  &  d\textsubscript{dkbnd}$ = [-q_1,q_2]$ \\ 
 \hline
        c\textsubscript{ep}& $[2] \times [R] \times [0,E]$ array  &  c$^T$\textsubscript{ep}$[i,m]X = (-1)^{i+1}\epsilon_m$ for $i = 1,2$ and $1 \leq m \leq R$\\ 
 \hline
   d\textsubscript{ep}& $[2] \times [R]$ array & d\textsubscript{ckbnd}$[i,m] = \frac{1}{4-m^2/T^2} \cdot \frac{2m}{\pi \sqrt{6T^3}}$ for  $i = 1,2$ and $1 \leq m \leq R$\\ 
 \hline
       c\textsubscript{del}& $[2] \times [R]\times [0,E]$ array  &  c$^T$\textsubscript{del}$[i,m]X = (-1)^{i+1}\delta_m$ for $i = 1,2$ and $1 \leq m \leq R$ \\ 
 \hline
   d\textsubscript{del}& $[2] \times [R]$ array  &  d\textsubscript{dkbnd}$[i,m] =  \frac{1}{4-m^2/T^2}\cdot \frac{4}{\pi\sqrt{2T}} $ for $i = 1,2$ and $1 \leq m \leq R$ \\ 
 \hline
        c\textsubscript{cosup}& $[0,E]$-vector  &  c$^T$\textsubscript{cosup}$X =  $Fcos\textsubscript{UB}$[2]$ \\ 
 \hline
   d\textsubscript{cosup}& Real & d\textsubscript{cosup}$=\frac{N}{2}(p_2^2 + \max\{q_1^2,q_2^2\})$. \\ 
 \hline

\end{tabular}

\caption{Data for SOCP}
\label{socpdata}
\end{table}

\normalsize

\clearpage

\section{Appendix II: Dual SOCP}

The dual of a SOCP can be concisely stated using the data from the primal, see Section 4.1 of \cite{LO} for details on dual SOCPs. The standard formulation of a dual SOCP involves equality constraints. We will eliminate some variables of the dual SOCP so that no equality constraints are used. This will allow us to check a particular variable assignment is far enough inside the interior of the dual space, thereby guaranteeing that assignment is feasible in spite of worst-case floating point arithmetic errors. We begin by reviewing the formulation of a SOCP and its dual. The standard formulation of a SOCP takes the form 
\begin{align}\label{primalform}
\textsc{minimize: } &\Phi^Tx  \nonumber \\
\textsc{subject to: } &\| A_ix + b_i \|  \leq c_i^Tx+d_i, \quad 1 \leq i \leq N ,
\end{align}
where $x$ is a $n$-vector variable, $\Phi \in \mathbb{R}^n$, $A_i$ is an $n_i \times n$ matrix, $b_i \in \mathbb{R}^{n_i}$, $c_i \in \mathbb{R}^n$, and $d_i \in \mathbb{R}$. We have formulated our Appendix I primal SOCP in this way. The dual to \eqref{primalform} is

\begin{align}\label{dualform}
\textsc{maximize: } &- \sum_{i=1}^N(b_i^Tz_i + d_iy_i)  \nonumber \\
\textsc{subject to: } & \sum_{i=1}^N (A_i^Tz +c_iy_i) = \Phi \nonumber \\
&\| z_i \| \leq y_i, \quad 1 \leq i \leq N ,
\end{align}
where $z_i$ is a $n_i$-vector of variables, and $y_i$ is a nonnegative variable. There are $n$ linear equations forming the equality constraints of \eqref{dualform}. For our SOCP dual, we need to eliminate the equality constraints. Suppose that for some row $1 \leq j \leq n$ of the system of equality constraints in \eqref{dualform} there is an index $1 \leq k \leq N$ such that vector $c_k$ is all zero except for a nonzero entry at row $j$, then we can solve for the variable $y_k$ in terms of the other variables: 
\begin{equation}\label{elimdual} y_k= \frac{1}{c_k[j]} \big(\Phi_j - \sum_{\substack{1 \leq i \leq N \\ i \neq k }} (A^T_iz + c_iy_i )[j]\big) \geq 0.\end{equation}
After adding the inequality on the righthand side of \eqref{elimdual} to the constraints of program \eqref{dualform}, we can eliminate variable $y_k$ and the row $j$ equality constraint from program \eqref{dualform}. Our primal in Appendix I has $E+1 = 2N + 2T + 2R + 1$ variables, and so the corresponding dual has $E+1$ equality constraints, corresponding to the entries of vector $\Phi$. For each $0 \leq j \leq E$, there is a `c' vector with a unique nonzero entry in the $j^{th}$ row. This collection of `c' vectors allows us to eliminate all equality constraints. The `c' vectors we use to eliminate the equality constraints are described in Table~\ref{celim}.

\clearpage

\subsection{The SOCP dual}\label{socpdual}

\noindent\textsc{Variables: } See Table \ref{socpdualvar}.

\vspace{0.15cm}

\noindent\textsc{Variable expressions: } See Table \ref{socpdualvarex}.

\vspace{0.15cm}

\noindent\textsc{Objective: maximize} obj (defined in Table \ref{socpdualvarex}). 

\vspace{0.15cm}

\noindent\textsc{Constraints: } \begin{align*}  \text{ye\textsubscript{obnd}} & \geq 0 \\
\text{ye\textsubscript{wbnd\_2}}[j] & \geq 0 \text{ for all } 1 \leq j \leq N \\
\text{ye\textsubscript{vbnd\_2}}[j] & \geq 0 \text{ for all } 1 \leq j \leq N \\
\text{ye\textsubscript{ckbnd\_2}}[k] & \geq 0 \text{ for all } 1 \leq k \leq T \\
\text{ye\textsubscript{dkbnd\_2}}[k] & \geq 0 \text{ for all } 1 \leq k \leq T \\
\text{ye\textsubscript{ep\_2}}[m] & \geq 0 \text{ for all } 1 \leq m \leq R \\
\text{ye\textsubscript{del\_2}}[m] & \geq 0 \text{ for all } 1 \leq m \leq R \\
 (\text{y\textsubscript{coscone}}[m])^2 & \geq \sum_{i=1}^3 ( \text{z\textsubscript{coscone}}[m,i] )^2 \text{ for all } 1 \leq m \leq 2R\\
  (\text{y\textsubscript{par}})^2 & \geq \sum_{k=1}^{2T} ( \text{z\textsubscript{par}}[k] )^2 . 
\end{align*}

\renewcommand{\arraystretch}{1.5}

\begin{table}[h!] \centering
\scriptsize
 \begin{tabular}{| c |c|}

 \hline
          Name & Index of nonzero row   \\ 
 \hline
 \hline
    
    c\textsubscript{obnd} & 0 \\
    \hline
    c\textsubscript{wbnd}$[2,j]$ & $j$ for $1 \leq j \leq N$ \\ 
    \hline
    c\textsubscript{vbnd}$[2,j]$ & $N+j$ for $1 \leq j \leq N$ \\ 
    \hline
    c\textsubscript{ckbnd}$[2,j]$ & $2N+j$ for $1 \leq j \leq T$ \\ 
    \hline
    c\textsubscript{dkbnd}$[2,j]$ & $2N+T+j$ for $1 \leq j \leq T$ \\ 
    \hline
    c\textsubscript{ep}$[2,j]$ & $2N+2T+j$ for $1 \leq j \leq R$ \\ 
    \hline
    c\textsubscript{del}$[2,j]$ & $2N+2T+R+j$ for $1 \leq j \leq R$ \\

\hline
\end{tabular}

\caption{c-vectors with one nonzero entry}
\label{celim}
\end{table}

\renewcommand{\arraystretch}{1.5}

\begin{table}[h!] \centering
\scriptsize
 \begin{tabular}{| c |c|}

 \hline
          Name & Description  \\ 
 \hline
 \hline
    
    z\textsubscript{coscone} & $[2R] \times [3]$ variable array \\
    y\textsubscript{coscone} & $[2R]$-vector of nonnegative variables \\
    \hline
        z\textsubscript{par} & $[2T]$-vector of variables\\
    y\textsubscript{par} & nonnegative variable \\
    \hline
        y\textsubscript{wbnd\_1} &$[N]$-vector of nonnegative variables\\
      \hline
              y\textsubscript{vbnd\_1} &$[N]$-vector of nonnegative variables\\
      \hline
        y\textsubscript{sum} &  $[2]$-vector of nonnegative variables\\
          \hline
        y\textsubscript{mean} & nonnegative variable \\
          \hline
        y\textsubscript{mome} & nonnegative variable \\
          \hline
        y\textsubscript{sin-lower} &  $[2R]$-vector of nonnegative variables \\
          \hline
        y\textsubscript{sin-upper} &  $[2R]$-vector of nonnegative variables\\
          \hline
        y\textsubscript{ckbnd\_1} & $ [T]$-vector of nonnegative variables \\
          \hline
        y\textsubscript{dkbnd\_1} & $[T]$-vector of nonnegative variables\\
          \hline
        y\textsubscript{c1bnd} &$[2]$-vector of nonnegative variables \\
          \hline
        y\textsubscript{d1bnd} & $[2]$-vector of nonnegative variables \\
          \hline
        y\textsubscript{ep\_1} & $ [R]$-vector of nonnegative variables \\
          \hline
        y\textsubscript{del\_1} & $[R]$-vector of nonnegative variables\\
          \hline
        y\textsubscript{cosup} & nonnegative variable \\
  
\hline
\end{tabular}

\caption{Variables for dual SOCP}
\label{socpdualvar}
\end{table}

\renewcommand{\arraystretch}{1.5}

\begin{table}[h] \centering
\scriptsize
 \begin{tabular}{| c |c|c|}

 \hline
          Name & Description & Definition  \\ 
 \hline
 \hline

        ATz\textsubscript{coscone} & $[0,E]$-expression vector & ATz\textsubscript{coscone}$[j] = \sum_{m=1}^{2R} \sum_{i=1}^3 $A\textsubscript{coscone}$[m,i,j]$z\textsubscript{coscone}$[m,i]$\\
      \hline
           cy\textsubscript{coscone} &$[0,E]$-expression vector &cy\textsubscript{coscone}$[j] = \sum_{m=1}^{2R}$c\textsubscript{coscone}$[m,j]$y\textsubscript{coscone}$[m]$ \\
      \hline
          ATz\textsubscript{par} &$[0,E]$-expression vector &ATz\textsubscript{par} $[j]= \sum_{k=1}^{2T}$A\textsubscript{par}$[k,j]$z\textsubscript{par}$[k]$ \\
      \hline
             cy\textsubscript{wbnd\_1} &$[0,E]$-expression vector & cy\textsubscript{wbnd\_1} $[j]=\sum_{i=1}^N$c\textsubscript{wbnd}$[1,i,j]$y\textsubscript{wbnd\_1}$[i]$ \\
      \hline
             cy\textsubscript{vbnd\_1} &$[0,E]$-expression vector & cy\textsubscript{wbnd\_1} $[j]=\sum_{i=1}^N$c\textsubscript{vbnd}$[1,i,j]$y\textsubscript{vbnd\_1}$[i]$ \\
      \hline
             cy\textsubscript{sum} &$[0,E]$-expression vector & $\sum_{i=1}^2$cy\textsubscript{sum} $[i,j]=$c\textsubscript{sum}$[i,j]$y\textsubscript{sum}$[i]$ \\
      \hline
             cy\textsubscript{mean} &$[0,E]$-expression vector & cy\textsubscript{mean} $[j]=$c\textsubscript{mean}$[j]$y\textsubscript{mean} \\
      \hline
             cy\textsubscript{mome} &$[0,E]$-expression vector & cy\textsubscript{mome} $[j]=$c\textsubscript{mome}$[j]$y\textsubscript{mome} \\
      \hline
               cy\textsubscript{sine} &$[0,E]$-expression vector &\makecell{ cy\textsubscript{sine}$[j] = \sum_{m=1}^{2R}$c\textsubscript{sin-lower}$[m,j]$y\textsubscript{sin-lower}$[m]$\\$+\sum_{m=1}^{2R}$c\textsubscript{sin-upper}$[m,j]$y\textsubscript{sin-upper}$[m]$}\\
      \hline
           cy\textsubscript{cd} &$[0,E]$-expression vector & \makecell{ cy\textsubscript{cd}$[j] = \sum_{m=1}^{T}$c\textsubscript{ckbnd}$[1,k,j]$y\textsubscript{ckbnd\_1}$[k]$\\$+\sum_{k=1}^{T}$c\textsubscript{dkbnd}$[1,k,j]$y\textsubscript{ckbnd\_1}$[k]$\\ $+\sum_{i=1}^{2}($c\textsubscript{c1bnd}$[i,j]$y\textsubscript{c1bnd}$[i]+$c\textsubscript{d1bnd}$[i,j]$y\textsubscript{d1bnd}$[i])$}\\
      \hline
       cy\textsubscript{epdel} &$[0,E]$-expression vector &\makecell{ cy\textsubscript{epdel}$[j] = \sum_{m=1}^{R}$c\textsubscript{ep}$[1,m,j]$y\textsubscript{ep\_1}$[m]$\\$+\sum_{m=1}^{R}$c\textsubscript{del}$[1,m,j]$y\textsubscript{del\_1}$[m]$}\\
      \hline
         cy\textsubscript{cosup} &$[0,E]$-expression vector & cy\textsubscript{cosup} $[j]=$c\textsubscript{cosup}$[j]$y\textsubscript{cosup} \\
         
      \hline
         subtotal &$[0,E]$-expression vector & subtotal is the sum of all previous $[0,E]$-expression vectors in this table  \\
            \hline
            \hline

       ye\textsubscript{obnd} & expression & $1-$subtotal$[0]$ \\
      \hline

              ye\textsubscript{wbnd\_2} &$[N]$-expression vector & ye\textsubscript{wbnd\_2}$[j] = -$ subtotal$[j]$ for $1 \leq j \leq N$ \\
      \hline
             ye\textsubscript{vbnd\_2} & $[N]$-expression vector & ye\textsubscript{vbnd\_2}$[j] = -$ subtotal$[N+j]$ for $1 \leq j \leq N$ \\
      \hline

       ye\textsubscript{ckbnd\_2} & $ [T]$-expression vector& ye\textsubscript{ckbnd\_2}$[j] = -$ subtotal$[2N+j]$ for $1 \leq j \leq T$ \\
          \hline
        ye\textsubscript{dkbnd\_2} & $[T]$-expression vector& ye\textsubscript{dkbnd\_2}$[j] = -$ subtotal$[2N+T+j]$ for $1 \leq j \leq T$\\
    \hline
      ye\textsubscript{ep\_2} & $[R]$-expression vector & ye\textsubscript{ep\_2}$[j] = -$ subtotal$[2N+2T+j]$ for $1 \leq j \leq R$\\
          \hline
        ye\textsubscript{del\_2} & $[R]$-expression vector& ye\textsubscript{del\_2}$[j] = -$ subtotal$[2N+2T+R+j]$ for $1 \leq j \leq R$\\
          \hline
  \hline
  
        bTz\textsubscript{coscone} & expression &  bTz\textsubscript{coscone}$ = \sum_{m=1}^{2R} \sum_{i=1}^3 $b\textsubscript{coscone}$[m,i]$z\textsubscript{coscone}$[m,i]$\\
      \hline
                 dy\textsubscript{coscone} & expression &  dy\textsubscript{coscone}$ = \sum_{k=1}^{2R} $d\textsubscript{coscone}$[k]$y\textsubscript{coscone}$[k]$\\
      \hline
                       dy\textsubscript{par} & expression &  dy\textsubscript{par}$ =  $d\textsubscript{par}y\textsubscript{par}\\
      \hline
                     dy\textsubscript{obnd} & expression &  dy\textsubscript{obnd}$ = $d\textsubscript{obnd}ye\textsubscript{obnd}\\
                     \hline
               dy\textsubscript{sum} & expression &  dy\textsubscript{sum}$ = \sum_{i=1}^{2} $d\textsubscript{sum}$[i]$y\textsubscript{sum}$[i]$\\
      \hline
                  dy\textsubscript{mean} & expression &  dy\textsubscript{mean}$ = $d\textsubscript{mean}y\textsubscript{mean}\\
      \hline
                  dy\textsubscript{mome} & expression &  dy\textsubscript{mome}$ = $d\textsubscript{mome}y\textsubscript{mome}\\
      \hline
                    dy\textsubscript{cd} & expression &  \makecell{  dy\textsubscript{cd}$ = \sum_{i=1}^{2} ($d\textsubscript{c1bnd}$[i]$y\textsubscript{c1bnd}$[i] +$d\textsubscript{d1bnd}$[i]$y\textsubscript{d1bnd}$[i]$) \\ $+\sum_{k=1}^{T} \big($d\textsubscript{ckbnd}$[1,k]$y\textsubscript{ckbnd\_1}$[k] +$d\textsubscript{dkbnd}$[1,k]$y\textsubscript{dkbnd\_1}$[k]$ \\ $+ $d\textsubscript{ckbnd}$[2,k]$ye\textsubscript{ckbnd\_2}$[k] +$d\textsubscript{dkbnd}$[2,k]$ye\textsubscript{dkbnd\_2}$[k]$\big) } \\
      \hline
             dy\textsubscript{epdel} & expression &\makecell{  dy\textsubscript{epdel}$ = \sum_{m=1}^{R} ($d\textsubscript{ep}$[1,m]$y\textsubscript{ep\_1}$[m] +$d\textsubscript{del}$[1,m]$y\textsubscript{del\_1}$[m]$) \\ $+\sum_{m=1}^{R} ($d\textsubscript{ep}$[2,m]$ye\textsubscript{ep\_2}$[m] +$d\textsubscript{del}$[2,m]$ye\textsubscript{del\_2}$[m]$) }\\
      \hline
            dy\textsubscript{cosup} & expression &  dy\textsubscript{cosup}$ = $d\textsubscript{cosup}y\textsubscript{cosup}\\
      \hline
             obj & expression &  $-1$ times the sum of the previous 10 expressions\\
      \hline

\end{tabular}

\caption{Variable expressions for dual SOCP}
\label{socpdualvarex}
\end{table}

\subsection{Feasibility verification} 

We will use a numerical solver to find feasible points in the dual SOCP, thereby giving lower bounds to the primal optimum. Given an assignment of values to the variables of the dual, we need to verify that in spite of any floating point arithmetic errors, the constraints are satisfied. We use IBM's CPLEX optimization software, which employs a barrier algorithm to solve SOCP. CPLEX uses double-precision (64-bit) arithmetic in its computations with a rounding error unit of $u = 2^{-53}$. For any $x \in \mathbb{R}$, let $\fl(x)$ be its stored double-precision value. If no overflow or underflow occurs, we have the following bounds. 

\begin{itemize}
\item Let $x,y \in \mathbb{R}$ and $\star \in \{ +,-,\times,\div\}$ be an operation where $y = 0$ and $\star = \div$ do not both hold, then for sum $|\delta| \leq u$:
\[\fl(x \star y) = (x \star y)(1+\delta) .\]
\item Let $n \in \mathbb{N}$. If $x_1,\ldots,x_n \in \mathbb{R}$, and $S_n = \sum_{k=1}^n x_k$, then
\[ \big| S_n - \fl (S_n) \big| \leq \frac{(n-1)u}{1-(n-1)u} \sum_{k=1}^n |x_k|.\]
\end{itemize}

Details of these estimates, and improvements on them can be found in  \cite{HIG}. Let $n \in \mathbb{N}$, and $x_k,y_k \in \mathbb{R}$ for $k=1,\ldots,n$. The arithmetic expressions involved in the constraints of our dual SOCP take the form 
\[ S= \sum_{k=1}^n x_ky_k .\]
By the earlier two floating point estimates we have: 
\begin{equation}\label{FLerr} \big| S - \fl (S) \big| \leq \frac{(n-1)u}{1-(n-1)u} \sum_{k=1}^n |\fl(x_iy_i)| \leq \frac{(n-1)u(1+u)}{1-(n-1)u} \sum_{k=1}^n |x_iy_i| .\end{equation}
After receiving a proposed assignment of dual variable values from CPLEX, we verify that every constraint inequality is strictly satisfied by a margin of at least the above error bound. The error bound on the righthand side of \eqref{FLerr} needs to be computed. We did this by roughly estimating the individual terms. For example, consider the final constraint in our dual SOCP: 
\begin{equation}\label{findualcon}   (\text{y\textsubscript{par}})^2 - \sum_{k=1}^{2T} ( \text{z\textsubscript{par}}[k] )^2  \geq 0. \end{equation}
For all feasible solutions to the dual SOCP we use in this paper, the maximum absolute value of $\{\text{z\textsubscript{par}}[k]\}_{k=1}^{2T}$ is less than 0.0005. Therefore, the error bound for \eqref{findualcon} is less that $10^{-13}$. On the other hand, the minimum value of the lefthand side of \eqref{findualcon} for all feasible solutions we use exceeds $10^{-11}$.

\subsection{Reusing a feasible solution} 

By inspecting Table~\ref{socpdata} we see that the parameters $h_1,h_2,p_1,p_2$ are only used in the `d' type data (never in A, b, or c). From \eqref{primalform} and \eqref{dualform} we see this means that $h_1,h_2,p_1,p_2$ will only affect the objective function of the dual, but not the constraints. Hence any single feasible solution to the dual, will give a lower bound on the optimum for any choice of $h_1,h_2,p_1,p_2$. We make this precise below.

\begin{lemma}\label{reuselem} Fix a choice of input $N,T,R,h_1,h_2,p_1,p_2,q_1,q_2$ for the dual program. Suppose there exists a feasible solution for this input and let $\ol{\emph{y}}$\textsubscript{mean},$\ol{\emph{y}}$\textsubscript{mome},$\ol{\emph{y}}$\textsubscript{c1bnd}, and $\ol{\emph{y}}$\textsubscript{cosup} be the values of the dual variables ${\emph{y}}$\textsubscript{mean},${\emph{y}}$\textsubscript{mome},${\emph{y}}$\textsubscript{c1bnd}, and ${\emph{y}}$\textsubscript{cosup} for this solution. Let $h_1',h_2',p_1',p_2' \in \mathbb{R}$ and $\emph{obj}(h_1',h_2',p_1',p_2')$ be the value of the objective function for this solution and input choice $N,T,R,h_1',h_2',p_1',p_2',q_1,q_2$. If $\emph{opt}(h_1',h_2',p_1',p_2')$ is the optimum of the dual program with input $N,T,R,h_1',h_2',p_1',p_2',q_1,q_2$ then 

\begin{align} \emph{opt}(h_1',h_2',p_1',p_2') &  \geq \emph{obj}(h_1,h_2,p_1,p_2)\nonumber \\ 
& +\frac{N}{4} \left( 2(h_1'-h_1)\ol{\emph{y}}\textsubscript{mean} + (h_2^2-(h_2')^2)\ol{\emph{y}}\textsubscript{mome} + 2(p_2^2-(p_2')^2\ol{\emph{y}}\textsubscript{cosup}\right)\nonumber \\
  & \quad + (p_1'-p_1)\ol{\emph{y}}\textsubscript{c1bnd}[1] + (p_2-p_2')\ol{\emph{y}}\textsubscript{c1bnd}[2] . \label{changeinobj}\end{align}
\end{lemma}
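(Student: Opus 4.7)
The plan is to verify two things: first, that the constraints of the dual SOCP in Section~\ref{socpdual} are completely independent of the parameters $h_1,h_2,p_1,p_2$; second, that the objective function depends on these parameters in a simple affine/quadratic way involving only the four specified dual variables. With both observations in hand, reusing a feasible solution and recomputing the objective immediately yields a lower bound for the optimum under any new choice of parameters.

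First I would inspect Table~\ref{socpdata} and note that the parameters $h_1,h_2,p_1,p_2$ appear only in the entries d\textsubscript{mean}, d\textsubscript{mome}, d\textsubscript{c1bnd}, and d\textsubscript{cosup}, and never in any `A', `b' or `c' data. Since the constraints of the dual SOCP listed in Section~\ref{socpdual} are defined via the `A' and `c' data (together with the constant $\Phi$ used in the eliminated equality block, which is independent of these parameters), all constraints are preserved under any change $(h_1,h_2,p_1,p_2) \to (h_1',h_2',p_1',p_2')$. Therefore the given feasible assignment, in particular with the values $\ol{y}\text{\textsubscript{mean}},\ol{y}\text{\textsubscript{mome}},\ol{y}\text{\textsubscript{c1bnd}},\ol{y}\text{\textsubscript{cosup}}$, remains feasible for the input $N,T,R,h_1',h_2',p_1',p_2',q_1,q_2$.

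Next I would compute $\text{obj}(h_1',h_2',p_1',p_2') - \text{obj}(h_1,h_2,p_1,p_2)$ using the definitions of the dy expressions in Table~\ref{socpdualvarex} and the defining formulas of Table~\ref{socpdata}. Since the objective is the negative of a sum containing the dy terms, only the contributions from dy\textsubscript{mean}, dy\textsubscript{mome}, dy\textsubscript{cd}, and dy\textsubscript{cosup} can change. Using d\textsubscript{mean}$ = -h_1 N/2$, d\textsubscript{mome}$ = (2/3 + h_2^2/2)N/2$, d\textsubscript{c1bnd}$ = [-p_1,p_2]$, and d\textsubscript{cosup}$ = (N/2)(p_2^2+\max\{q_1^2,q_2^2\})$, the only terms that change are $(N/2)(h_1'-h_1)\ol{y}\text{\textsubscript{mean}}$, $(N/4)(h_2^2-(h_2')^2)\ol{y}\text{\textsubscript{mome}}$, $(p_1'-p_1)\ol{y}\text{\textsubscript{c1bnd}}[1]$, $(p_2-p_2')\ol{y}\text{\textsubscript{c1bnd}}[2]$, and $(N/2)(p_2^2-(p_2')^2)\ol{y}\text{\textsubscript{cosup}}$. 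Collecting these gives exactly the right-hand side of \eqref{changeinobj}.

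Finally I would conclude by weak duality: since the assignment is feasible for the perturbed input, $\text{opt}(h_1',h_2',p_1',p_2') \geq \text{obj}(h_1',h_2',p_1',p_2')$, and substituting the expression above finishes the argument. The only possible obstacle here is simple bookkeeping: making sure that the $\max\{q_1^2,q_2^2\}$ term in d\textsubscript{cosup} cancels cleanly (which it does because $q_1,q_2$ are unchanged), and that signs are tracked consistently when passing from the definition of the objective (with its leading minus sign) to the explicit formula. Since the constraints contain no quadratic dependence on the parameters, no further second-order adjustment is needed.
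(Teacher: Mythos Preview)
Your proposal is correct and follows essentially the same approach as the paper: you observe that $h_1,h_2,p_1,p_2$ appear only in the `d' data (hence only in the dual objective, not its constraints), compute the change in the objective term by term from Tables~\ref{socpdata} and~\ref{socpdualvarex}, and then use that any feasible point's objective lower-bounds the dual optimum. The paper's own proof is just a terser version of exactly this; your only slight terminological slip is calling the final step ``weak duality'' when it is really just the definition of the maximum over feasible points.
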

\begin{proof} From the definitions of Table~\ref{socpdata} and Table~\ref{socpdualvarex} the value of \text{obj}$(h_1',h_2',p_1',p_2') - \text{obj}(h_1,h_2,p_1,p_2)$ is precisely the quantity on the second two lines of \eqref{changeinobj}. Since opt$(h_1',h_2',p_1',p_2') \geq $ obj$(h_1',h_2',p_1',p_2')$ we have the claimed result.

\end{proof}

We now discuss how Lemma~\ref{reuselem} is applied. We can find a feasible point for our dual SOCP with input $N = 25000, T = 7000, R = 10$ and 
\[ h_1 = h_2 = 0.015, \quad p_1 = p_2 = 0.385, \quad  -q_1 =q_2 =  0.02, \quad \text{obj} = 0.37905.\]
This feasible point makes the following variable assignments: 
\begin{align*}
\text{y\textsubscript{mean}} &= 0.0000014902 \\
\text{y\textsubscript{mome}} &= 0.00010235 \\
\text{y\textsubscript{cosup}} &= 0.000038011 \\
\text{y\textsubscript{c1bnd}} &= (0.35962,0) .\\
\end{align*}

Let opt$(h_1',h_2',p_1',p_2')$ be as defined Lemma~\ref{reuselem}. We can use this feasible point to determine a set of $h,p \in \mathbb{R}$ such that $h,p \in \mathbb{R}$ has opt$(h,h,p,p) \geq 0.379005$. By Lemma~\ref{reuselem}, if $h,p \in \mathbb{R}$ is such that

\begin{align} \frac{25000}{4} \big( 2(h-0.015)& \cdot 0.0000014902+ (0.015^2-h^2)\cdot 0.00010235  + 2(0.385^2-p^2) \cdot 0.000038011 \big) \nonumber \\
& +  (0.385-p) \cdot 0.35962 \geq 0.379005 - 0.37905, \label{geineq}
\end{align}
then we can conclude opt$(h,h,p,p) \geq 0.379005$. The set of $(h,p)$ that satisfy the above inequality lie inside an ellipse, shown in Figure~\ref{oneellip}.

\begin{figure}[h!]
  \centering
    \includegraphics[width=0.4\textwidth]{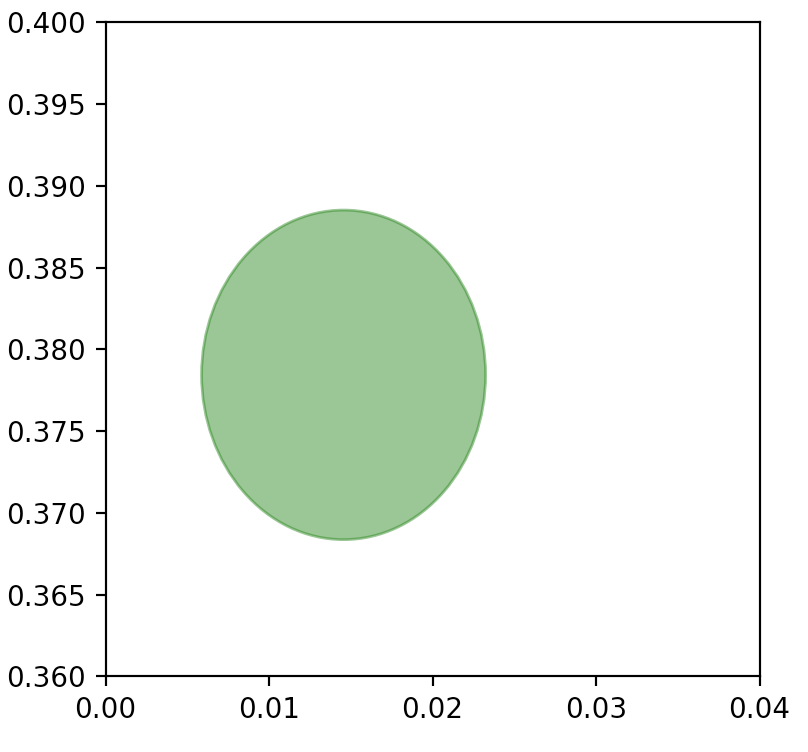}
    \caption{A subset of the region in $(h,p)$ space where opt$(h,h,p,p) \geq 0.379005$.  }
    \label{oneellip}
\end{figure}

This gives a consequence on the original $f,M$ functions of \eqref{fgm} by Proposition~\ref{convexprop}. For any $f,M$ as in  \eqref{fgm}, put 
\[ h = E(M),  \quad p = \int_{-1}^1 \cos(\pi x) f(x) dx, \quad \text{and} \quad q = \int_{-1}^1 \sin(\pi x) f(x) dx .\]
If $-0.02 \leq q \leq 0.02$ and $(h,p)$ defined above satisfy \eqref{geineq}, then $\|M\|_{\infty} \geq 0.379005$.

\end{document}